\theoremstyle{plain}
\newtheorem{theorem}{Theorem}[section]  
\newtheorem{lemma}[theorem]{Lemma}
\theoremstyle{definition}
\newtheorem{definition}[theorem]{Definition}
\theoremstyle{remark}
\newtheorem{remark}[theorem]{Remark}
\newcommand{\e}{\mathrm{e}}  
\newcommand{\A}{\mathcal{A}}  
\newcommand{\Sch}{\mathcal{S}}  
\newcommand{\h}{\mathbb{H}}  
\newcommand{\Ha}{\mathcal{H}}  
\newcommand{\hb}{\widehat{\mathbb{H}}}  
\newcommand{\nablah}{\nabla_{\mathbb{H}}}
\newcommand{\R}{\mathbb{R}}  
\newcommand{\Rfp}{\mathcal{R}_+^{4}}  
\newcommand{\di}{\mathrm{d}}  
\DeclareMathOperator{\divr}{div}  
\DeclareMathOperator{\PV}{P.V.}  
\DeclareMathOperator{\maximum}{max}  
\numberwithin{equation}{section}
\def\ps@pprintTitle{%
 \let\@oddhead\@empty
 \let\@evenhead\@empty
 \def\@oddfoot{}%
 \let\@evenfoot\@oddfoot}
\begin{document}

\title{Rigidity results for non local phase transitions in the Heisenberg group $\h$\footnote{NOTICE: this is the author's version of a work that was accepted for publication in {\em Discrete and Continuous Dynamical Systems -- Series A (DCDS-A)}. Changes resulting from the publishing process, such as peer review, editing, corrections, structural formatting, and other quality control mechanisms may not be reflected in this document. Changes may have been made to this work since it was submitted for publication.}}

\author[LL]{Luis F. L\'{o}pez}
\address[LL]{Departamento de Ingenier\'{\i}a Matem\'atica, Universidad
de Chile, Casilla 170 Correo 3, Santiago, Chile}
\ead{llopez@dim.uchile.cl}

\author[YS]{Yannick Sire} 
\address[YS]{Aix-Marseille Universit\'e,
Laboratoire d'Analyse, Topologie, Probabilit\'e (LATP),
9, rue F. Joliot Curie,
13453 Marseille Cedex 13 France}
\ead{sire@cmi.univ-mrs.fr}

\begin{abstract}
In the Heisenberg group framework, we study rigidity properties for stable solutions of $(-\Delta_\h)^sv=f(v)$ in $\h$, $s\in(0,1)$. We obtain a Poincar\'e type inequality in connection with a degenerate elliptic equation in $\R^4_+$; through an extension (or ``lifting") procedure, this inequality will be then used for giving a criterion under which the level sets of the above solutions are minimal surfaces in $\h$, i.e. they have vanishing mean curvature.    
\end{abstract}

\begin{keyword}
 Nonlocal phase transitions \sep fractional operators \sep Poincar\'e-type inequality \sep Heisenberg group
\end{keyword}

\maketitle



\section{Introduction}

In this paper we study rigidity properties for stable solutions (see Definition~\ref{def1}) of non-local equations of the type 
\begin{equation}\label{1-1}
(-\Delta_\h)^sv=f(v)\quad\text{in}\ \h,
\end{equation}
where $s\in(0,1)$, $f\in C^{1,\gamma}(\R)$, $\gamma>\maximum \{0,1-2s\}$ and $\h$ is the Heisenberg group (see Section~2).

We want to give a geometric insight of the phase transition for equation~\eqref{1-1}. Following the ideas in \cite{SiVa}, we give a geometric proof of rigidity properties for fractional boundary reactions in $\h$. 

The relation between entire stable solutions and minimal surfaces, as performed in Theorem~\ref{thm2} and Theorem~\ref{thm3} below, is inspired by a famous conjecture of De Giorgi \cite{DeGi} (in the Euclidean setting) and in the spirit of the proof of Bernstein theorem given in \cite{Gi84}. Similar De Giorgi-type results (in the Euclidean setting) have been proven in \cite{CaSo} for the square root of the Laplacian, and later generalized in \cite{CaSi07} for arbitrary roots. In \cite{SiVa} is given a proof of analogous rigidity properties for phase transitions driven by fractional Laplacians. Unlike the method in \cite{CaSi07,CaSo}, which require a Liouville-type result, the proof in \cite{SiVa} is based on the recent work \cite{FaScVa} and relies heavily on a Poincar\'e-type inequality which involves the geometry of the level sets of $u$.       

To find such a Poincar\'e-type inequality, we shall use a suitably development of some techniques for level set analysis inspired by \cite{Fa,FaScVa,SiVa,StZu1,StZu2}; some properly modified computations, contained in \cite{FeVa08}, are needed in order to understand the complicated geometry of the Heisenberg group. As a result, we find a Poincaré-type inequality for stable solutions of a degenerate elliptic equation in $\R^4_+$ (see \eqref{1-2}). We use this inequality together with an ``abstract" formulation of a technique recently introduced by Caffarelli and Silvestre \cite{CaSi}, to study \eqref{1-1}.  

In Euclidean spaces, fractional operators have been studied in connection with different phenomena such as optimization \cite{DuLi}, flame propagation \cite{CaRoSi} and finance \cite{CoTa}. We also mention the thin obstacle problem and phase transition problems: see, for instance, \cite{CaSaSi} and \cite{SiVa}. From a probabilistic point of view, the standard fractional Laplacian is the infinitesimal generator of a Levy process (see, e.g., \cite{Be}). 

The standard fractional Laplacian is a non-local operator. This fact does not allow to apply local PDE techniques to treat nonlinear problems for $(-\Delta)^s$. To overcome this difficulty, Caffarelli and Silvestre showed in \cite{CaSi} that any fractional power of the Laplacian can be determined as an operator that maps a Dirichlet boundary condition to a Neumann-type condition via an extension problem. More precisely, let us consider the boundary reaction problem for $u=u(x,y)$, $x\in\R^N$ and $y>0$,
\begin{equation}\label{1-1a}
\left\{
 \begin{aligned}
 \divr(y^a &\nabla u)=0 && \text{in}\ \R^N\times(0,\infty),\\
 -y^au_y &=f(u) && \text{on}\ \R^N\times\{0\},
 \end{aligned}
\right.
\end{equation} 
where $a=1-2s$. It is proved in \cite{CaSi} that, up to a normalizing factor, the Dirichlet-to-Neumann operator $\Gamma_a:u|_{\partial\R^{N+1}_+}\mapsto-y^a u_y|_{\partial\R^{N+1}_+}$ is precisely $(-\Delta)^s$ and then that $u(x,0)$ is a solution of
\begin{equation}\label{1-1b}
(-\Delta)^su(x,0)=f(u(x,0)).
\end{equation}

On the other hand, sub-Laplacians in Carnot groups (i.e. simply connected stratified nilpotent Lie groups) exhibit strong analogies with classical Laplace operators in the Euclidean space (Harnack inequality, maximum principle, existence and estimates of the fundamental solution). Following \cite{CaSi}, a construction of a $\Delta_{\h}$-harmonic ``lifting'' operator $v=v(x)\mapsto u=u(x,y)$ from $\h$ to $\h\times\R^+$ can be carried out by means of the spectral resolution of $-\Delta_{\h}$ in $L^2(\h)$ in such a way that $v$ is the trace of the normal derivative of $u$ on $\{y=0\}$ (see \cite{FeFr} and the references therein). 

For the time being, we leave the precise framework for Section~2, instead we discuss the main results.

Let us define $\hb:=\h\times(0,+\infty)$. As in the Euclidean case, the study of the non-local equation \eqref{1-1} is related to the analysis of the following degenerate elliptic problem (see Section~2 for details):
\begin{equation}\label{1-2}
\left\{
 \begin{aligned}
 \divr_{\hb}(y^a & \nabla_{\hb}u)=0 &&\text{in } \h \times(0,\infty),\\
 -y^au_y&=f(u) &&\text{on } \h \times\{0\}.
 \end{aligned}
\right.
\end{equation}

\begin{definition}[Functional framework] 
\begin{enumerate}[(I)]
\item {\em Notion of weak solution}: \eqref{1-2} may be understood in the {\em weak sense}, namely supposing that $u\in L^\infty_{\mathrm{loc}}(\overline{\R^4_+})$ with
\begin{equation}\label{regularity}
y^a|\nabla_{\hb}u|^2\in L^1(B_R^+)
\end{equation}
for any $R>0$, and that
\begin{equation}\label{1-3}
\int_{\hb}y^a\langle\nabla_{\hb}u,\nabla_{\hb}\xi\rangle_{\hb}=\int_{\h}f(u)\xi
\end{equation}
for all $\xi:\R^4_+\rightarrow\R$ bounded, locally Lipschitz, which vanishes on $\R_+^4\setminus B_R$ and such that
\begin{equation}\label{1-4}
y^a|\nabla_{\hb}\xi|^2\in L^1(B_R^+).
\end{equation}
We use here the notation $\R_+^4=\R^3\times(0,\infty)$ and $B_R^+:=B_R\cap\R_4^+$.

\item {\em Notion of stability}: Let $u$ be a weak solution of \eqref{1-2}, $u$ is {\em stable} if  
\begin{equation}\label{1-5}
\int_{\hb}y^a|\nabla_{\hb}\xi|^2-\int_{\h}f'(u)\xi^2\geq 0
\end{equation}
for any $\xi$ as above. This condition is natural in the calculus of variation framework, in particular it says that the second variation of the associated functional has a sign, as it happens for local minima, for instance.
\end{enumerate}
\end{definition}

For the precise statement of our geometric result, we introduce the following notation: fixed $y>0$ and $c\in\R$, we look at the level set 
\[S:=\{x\in\R^3\ \text{s.t.}\ u(x,y)=c\},\]
and we consider the regular points of $S$, i.e.
\begin{equation}\label{1-6}
L:=\{x\in S\ \text{s.t.}\ \nablah u(x,y)\neq 0\}.
\end{equation}
Although $S$ and $L$ depend on $y\in(0,+\infty)$, we do not make it explicit in the notation.

We also define 
\[
 \Rfp:=\{(x,y)\in\h\times(0,+\infty)\ \text{s.t.}\ \nablah u(x,y)\neq 0\}.
\]
Since L is a smooth manifold, given $x\in L$, we denote:
\begin{enumerate}[(i)]
\item $\nu_{x,y}$ be the {\em intrinsic normal} along $L$,
\item $v_{x,y}$ be the {\em intrinsic unit tangent} along $L$,
\item $h_{x,y}$ be the {\em the intrinsic mean curvature} along $L$,
\item $p_{x,y}$ be the {\em imaginary curvature} along $L$,
\end{enumerate}
(see Definition~\ref{def2} for details).

In this framework, we can state our geometric formula; see Section~2 for the definition of the vector fields $X,Y,T$ and the Hessian matrix $H$.

\begin{theorem}\label{thm1}
Let $u\in C^2(\hb)$ be a bounded and stable weak solution of \eqref{1-2}. Assume furthermore that for all $R>0$,
 \begin{equation}\label{gradient_bound}
 |\nablah u|\in L^\infty(\overline{B_R^+}). 
 \end{equation}
 Then, for any $\phi\in C_0^\infty(\R^4)$, we have 
 \begin{equation}\label{1-7}
 \begin{aligned}
  \int\limits_{\hb}&y^a|\nablah u|^2|\nabla_{\hb}\phi|^2\\
   &\geq \int\limits_{\Rfp}y^a\left(|Hu|^2-\langle(Hu)^2\nu_{x,y},\nu_{x,y}\rangle_{\h}-2(TYuXu-TXuYu)\right)\phi^2\\
   &= \int\limits_{\Rfp}y^a|\nablah u|^2\left[h_{x,y}^2+\left(p_{x,y}+\frac{\langle Huv_{x,y},\nu_{x,y}\rangle_{\h}}
   {|\nablah  u|}\right)^2+2\langle T\nu_{x,y},v_{x,y}\rangle_{\h}\right]\phi^2.
 \end{aligned} 
\end{equation}
\end{theorem}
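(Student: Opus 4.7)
The plan is to test the stability inequality \eqref{1-5} with $\xi=|\nablah u|\,\phi$ (for $\phi\in C_0^\infty(\R^4)$), using a regularisation $\sqrt{|\nablah u|^2+\varepsilon^2}$ to handle the non-smoothness of $|\nablah u|$ near its zero set, and then to recognise in the resulting identity the geometric expression of \eqref{1-7}. The approximation is admissible thanks to \eqref{regularity} and \eqref{gradient_bound}, which also allow passing to the limit $\varepsilon\to 0^+$ at the end.

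The first task is to linearise \eqref{1-2} along the horizontal frame. Differentiating the bulk equation $\divr_{\hb}(y^a\nabla_{\hb}u)=0$ along $X$ and $Y$ produces weighted equations for $Xu$ and $Yu$, where the non-zero commutator $[X,Y]$ (a multiple of the vertical field $T$) generates extra $TXu$ and $TYu$ terms absent in the Euclidean theory. Differentiating the Neumann condition $-y^a u_y=f(u)$ yields $f'(u)\,Xu$ and $f'(u)\,Yu$ on $\h\times\{0\}$. Testing each linearised equation against $\phi^2\,Xu$ and $\phi^2\,Yu$ respectively and summing, the boundary contribution $f'(u)\bigl((Xu)^2+(Yu)^2\bigr)\phi^2=f'(u)\xi^2$ appears and cancels the stability boundary integral. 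Expanding $|\nabla_{\hb}\xi|^2$, applying the Kato-type bound $|\nabla_{\hb}(|\nablah u|)|^2\leq|\nabla_{\hb}(Xu)|^2+|\nabla_{\hb}(Yu)|^2$ on $\Rfp$, and collecting the remaining bulk terms then produces the middle inequality in \eqref{1-7}, with the term $-2(TYu\,Xu-TXu\,Yu)$ appearing as the precise signature of the commutator $[X,Y]$.

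For the geometric equality, I would decompose the horizontal Hessian $Hu$ on $\Rfp$ in the orthonormal frame $\{\nu_{x,y},v_{x,y}\}$. The expression $|Hu|^2-\langle (Hu)^2\nu_{x,y},\nu_{x,y}\rangle_{\h}$ equals $|Hu\,v_{x,y}|^2=\langle Hu\,v_{x,y},v_{x,y}\rangle_{\h}^2+\langle Hu\,v_{x,y},\nu_{x,y}\rangle_{\h}^2$. Using the Heisenberg formulas of Definition~\ref{def2}, the tangential component equals $h_{x,y}^2\,|\nablah u|^2$. The normal component $\langle Hu\,v_{x,y},\nu_{x,y}\rangle_{\h}^2$ combines with the $T$-correction $-2(TYu\,Xu-TXu\,Yu)$ to form the completed square $\bigl(p_{x,y}\,|\nablah u|+\langle Hu\,v_{x,y},\nu_{x,y}\rangle_{\h}\bigr)^2$, leaving the residual torsion contribution $2\langle T\nu_{x,y},v_{x,y}\rangle_{\h}\,|\nablah u|^2$.

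The main obstacle is precisely the bookkeeping of the Heisenberg commutators: every differentiation of the sub-Laplacian or of $|\nablah u|$ produces $T$-derivatives with no Euclidean counterpart, and the non-trivial algebraic fact is that they regroup exactly into the imaginary curvature $p_{x,y}$ and the torsion term $\langle T\nu_{x,y},v_{x,y}\rangle_{\h}$. Getting the correct signs and structure constants in this regrouping — the Heisenberg analog of the computations of \cite{FeVa08} — is the delicate part of the proof.
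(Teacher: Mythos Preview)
Your analytic strategy for the inequality is exactly the paper's: test stability \eqref{1-5} with $\xi=|\nablah u|\,\phi$, linearise \eqref{1-2} along $X$ and $Y$ (the paper does this weakly via Lemma~\ref{lemma3} and the identity \eqref{3-7} rather than by direct differentiation, and handles the zero set of $|\nablah u|$ by Stampacchia's Lemma~\ref{lemma2} plus a density argument rather than by an $\varepsilon$-regularisation, but these are interchangeable technical choices), cancel the boundary term $\int_{\h}f'(u)|\nablah u|^2\phi^2$, and finish with the Kato bound on the $\partial_y$-part. This part of your outline is correct and matches Theorem~\ref{thm6}.

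There is, however, a concrete slip in your geometric regrouping. Because the horizontal Hessian $Hu$ is \emph{not} symmetric in $\h$ (its antisymmetric part is $\tfrac12(YXu-XYu)=\tfrac12\,Tu$), the identity $|Hu|^2-\langle(Hu)^2\nu,\nu\rangle_\h=|Hu\,v|^2$ is false: with the paper's convention $(Hu)^2=(Hu)(Hu)^T$ one gets $|(Hu)^Tv|^2$, whose off-diagonal component is $\langle Hu\,\nu,v\rangle_\h$, not $\langle Hu\,v,\nu\rangle_\h$. The imaginary curvature $p_{x,y}=-Tu/|\nablah u|$ enters precisely through this asymmetry, \emph{inside} the term $|Hu|^2-\langle(Hu)^2\nu,\nu\rangle_\h$ itself (see Lemma~\ref{lemma4} and \cite{FeVa08}); it is not produced, as you suggest, by completing the square between $\langle Hu\,v,\nu\rangle_\h^2$ and the separate commutator contribution $-2(TYu\,Xu-TXu\,Yu)$. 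In the paper's decomposition the latter term stands alone as $2|\nablah u|^2\langle T\nu,v\rangle_\h$. So your final two sentences identify the right ingredients but assemble them incorrectly; the actual algebra of Lemma~\ref{lemma4} is what you would need to redo.
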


\begin{remark}
We observe that \eqref{1-7} may be interpreted in two ways:
\begin{enumerate}[(i)]
\item One way is to think that some interesting geometric objects which describe $u$, such as its intrinsic Hessian and the curvature of its level sets, are bounded by an energy term. These quantities involved in the inequality are weighted by a test function $\phi$ which can be chosen as we wish.
\item Another point of view consists in thinking that \eqref{1-7} bounds a suitably weighted $L^2$-norm of its gradient. The weights here are given by the stable solution $u$. So, this interpretation sees \eqref{1-7} as a Sobolev-Poincaré inequality.
\end{enumerate}
\end{remark}
The result in Theorem~\ref{thm1} has been inspired by \cite{StZu1,StZu2}; in particular, they obtained a similar inequality for stable solutions of the Allen-Cahn equation, and symmetry results for possibly singular or degenerated models have been obtained in \cite{Fa,FaScVa}. Actually, the study of geometric inequalities for semilinear equations goes back to \cite{StZu1,StZu2}, where uniformly elliptic PDEs in the Euclidean space were taken into account, and further important developments have been performed in \cite{Fa}.  Recently, in \cite{SiVa} has been proved a similar inequality to \eqref{1-7} in the Euclidean setting. Related geometric inequalities also played an important role in \cite{CaCap}. 

The next theorem is a rigidity result. For the precise statement of it, let us define the following suitably weighted energy:

\begin{equation}\label{1-8}
\eta(\tau)=\int_{B(0,\tau)}\! 4y^a|\nablah u(x_1,x_2,x_3,y)|^2(x_1^2+x_2^2+y^2)\,\di(x_1,x_2,x_3,y).
\end{equation}
In this expression, $B(0,\tau)$ represents a ball in $\hb$ with a gauge norm that will be defined in Section~4 (see \eqref{4-1} and \eqref{4-2}). No confusion should arise with the Euclidean ball.  

\begin{theorem}\label{thm2}
 Let the assumptions of the previous theorem hold. Suppose also that
 \begin{equation}\label{1-9}
 \langle T\nu_{x,y},v_{x,y}\rangle_\h\geq 0 \quad \text{for all } x\in\h, y>0;
 \end{equation}
 and $\eta$, previously defined, satisfies the growth
 \begin{equation}\label{1-10}
 \liminf_{R\to+\infty}\frac{\int_{\sqrt{R}}^R\frac{\eta(\tau)}{\tau^5}\,\di\tau+\frac{\eta(R)}{R^4}}{\log^2R}=0.
 \end{equation}
 Then, the level sets of $u$ intersected with $L$ (recall \eqref{1-6}) are minimal surfaces in the Heisenberg group (i.e., the  
 curvature $h_{x,y}$ vanishes identically) and on such surfaces the following holds
 \begin{equation}
 p_{x,y}=-\frac{\langle Huv_{x,y},\nu_{x,y}\rangle_{\h}}
 {|\nablah  u|}.
 \end{equation}
\end{theorem}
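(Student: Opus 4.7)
The plan is to test the geometric Poincaré inequality \eqref{1-7} of Theorem~\ref{thm1} against a logarithmic cutoff adapted to the gauge ball $B(0,\tau)$ in $\hb$, and then to exploit the sign condition \eqref{1-9} together with the growth hypothesis \eqref{1-10} to force each nonnegative piece on the right-hand side to vanish. Let $\rho$ denote the gauge norm on $\hb$ introduced in Section~4 (equations (4-1)--(4-2)); the expected sharp identity $|\nabla_{\hb}\rho|^{2}=C(x_{1}^{2}+x_{2}^{2}+y^{2})/\rho^{2}$ (from a Kaplan--Folland--Stein-type gauge, e.g.\ $\rho=((x_{1}^{2}+x_{2}^{2}+y^{2})^{2}+16x_{3}^{2})^{1/4}$) is precisely what produces the weight $4(x_{1}^{2}+x_{2}^{2}+y^{2})$ in the definition \eqref{1-8} of $\eta$. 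For $R\gg 1$, fix $\psi\in C^{\infty}([0,\infty))$ with $\psi\equiv 1$ on $[0,1/2]$ and $\psi\equiv 0$ on $[1,\infty)$, and set
\[
\phi_{R}(x,y):=\psi\!\left(\frac{2\log\rho(x,y)-\log R}{\log R}\right),
\]
so that $\phi_{R}\equiv 1$ on $B(0,\sqrt R)$, $\phi_{R}\equiv 0$ outside $B(0,R)$, and $|\nabla_{\hb}\phi_{R}|\leq C|\nabla_{\hb}\rho|/(\rho\log R)$.

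Next I would estimate the left-hand side of \eqref{1-7} with $\phi=\phi_{R}$. Using the gauge identity above,
\[
\int\limits_{\hb} y^{a}|\nablah u|^{2}|\nabla_{\hb}\phi_{R}|^{2}\;\leq\;\frac{C}{\log^{2}R}\int\limits_{\{\sqrt R\leq\rho\leq R\}}\frac{y^{a}|\nablah u|^{2}\,(x_{1}^{2}+x_{2}^{2}+y^{2})}{\rho^{4}}.
\]
Applying the coarea formula with respect to the level sets of $\rho$, this integral is of the form $C\int_{\sqrt R}^{R}\tau^{-4}\eta'(\tau)\,\di\tau$; an integration by parts yields the bound
\[
\frac{C}{\log^{2}R}\left(\int_{\sqrt R}^{R}\frac{\eta(\tau)}{\tau^{5}}\,\di\tau+\frac{\eta(R)}{R^{4}}\right),
\]
which, by assumption \eqref{1-10}, tends to $0$ along a suitable sequence $R_{n}\to+\infty$.

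For the right-hand side, the second line of \eqref{1-7} gives
\[
\int\limits_{\hb}y^{a}|\nablah u|^{2}|\nabla_{\hb}\phi_{R}|^{2}\;\geq\;\int\limits_{\Rfp}y^{a}|\nablah u|^{2}\!\left[h_{x,y}^{2}+\left(p_{x,y}+\frac{\langle Huv_{x,y},\nu_{x,y}\rangle_{\h}}{|\nablah u|}\right)^{\!2}+2\langle T\nu_{x,y},v_{x,y}\rangle_{\h}\right]\!\phi_{R}^{2}.
\]
Thanks to the sign condition \eqref{1-9}, all three summands in the bracket are nonnegative. Passing to the limit along $R_{n}\to+\infty$, with $\phi_{R_{n}}\to 1$ pointwise and monotone convergence on the nonnegative integrand, the right-hand side extends to an integral over all of $\Rfp$ and must therefore vanish. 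Since $|\nablah u|>0$ on $\Rfp$, each of the three summands vanishes a.e.\ there, giving $h_{x,y}\equiv 0$ (so the level sets intersected with $L$ are minimal surfaces in $\h$) and $p_{x,y}=-\langle Huv_{x,y},\nu_{x,y}\rangle_{\h}/|\nablah u|$, as required.

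The main obstacle is the second paragraph: one has to confirm that the gauge chosen in Section~4 gives $|\nabla_{\hb}\rho|^{2}\rho^{2}$ equal (up to a universal constant) to $x_{1}^{2}+x_{2}^{2}+y^{2}$, so that the cutoff weight matches exactly the integrand of $\eta(\tau)$ in \eqref{1-8}, and then to justify the coarea slicing and integration by parts rigorously in the anisotropic $\hb$-geometry. Everything else — the choice of the logarithmic test function, the non-negativity arguments, and the final vanishing — is essentially forced by the structure of \eqref{1-7}, \eqref{1-9}, and \eqref{1-10}.
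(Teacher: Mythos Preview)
Your proposal is correct and follows essentially the same route as the paper: a logarithmic cutoff in the gauge norm, the sign condition \eqref{1-9}, and the growth hypothesis \eqref{1-10} combined with the Poincar\'e inequality \eqref{1-7}. The paper resolves your stated obstacle by a direct computation with the explicit gauge $|Z|_{\hb}=(|x|_{\h}^{2}+y^{2})^{1/2}$ of \eqref{4-1}--\eqref{4-2} (not the Kaplan-type example you suggest), which yields the exact identity $|\nabla_{\hb}|Z|_{\hb}|^{2}=|Z|_{\hb}^{-2}(x_{1}^{2}+x_{2}^{2}+y^{2})$, and it replaces your coarea-plus-integration-by-parts step with an elementary Fubini argument (Lemma~\ref{lemma5}) that directly bounds $\int_{\A_{\sqrt R,R}} g\,|Z|_{\hb}^{-4}$ by $4\int_{\sqrt R}^{R}\eta(\tau)\tau^{-5}\,\di\tau+\eta(R)R^{-4}$ without differentiating $\eta$.
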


\begin{remark}
 \begin{enumerate}[(i)]
 \item We observe that \eqref{1-10} may be seen as a condition on the growth of a suitably weighted energy $\eta$.
 \item Notice also that if, for any $R$ large enough, \[\eta(R)\leq CR^4,\] for some constant $C>0$, then \eqref{1-10} is satisfied.
 \end{enumerate} 
\end{remark}

Before stating the rigidity result, let us precise the notion of stable solution for equation~\eqref{1-1}:
\begin{definition}\label{def1}
A bounded solution $v\in C^2(\h)$ of \eqref{1-1} is stable if for all $\varphi\in W_{\h}^{s,2}(\h)$ we have
\begin{equation}
\int_{\h}|(-\Delta_{\h})^{\frac{s}{2}}\varphi|^2-\int_{\h}f'(v)\varphi^2\geq 0.
\end{equation}
For the precise definition of the space $ W_{\h}^{s,2}(\h)$ and the fractional operator $(-\Delta_{\h})^{\frac{s}{2}}$, we refer the reader to Section~2.
\end{definition}

Throughout the paper, $C^\alpha(\h)$ denotes the set of H\"older continuous functions with respect to the norm $\rho$ defined in the next section (see \eqref{rho-norm}). Our rigidity result is the following:
\begin{theorem}\label{thm3}
Let $v\in C^{2,\sigma}(\h)$, $\sigma\in(0,2s)$, be a bounded stable solution of \eqref{1-1}. Assume also that
the ``harmonic lifting" of $v$ to $\hb$ (see Subsection~2.3), which we denote by $u$, satisfies \eqref{1-9} and \eqref{1-10}. Then, the level sets of $v$ in the vicinity of non-characteristic points are minimal surfaces in the Heisenberg group (i.e., the curvature $h$ vanishes identically).
\end{theorem}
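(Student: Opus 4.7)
The plan is to reduce Theorem~\ref{thm3} to Theorem~\ref{thm2} applied to the harmonic lifting $u$ of $v$. Since the hypotheses \eqref{1-9} and \eqref{1-10} are already imposed on $u$, the work consists in verifying that $u$ meets the standing assumptions of Theorem~\ref{thm1}, namely that $u$ is a bounded and stable weak solution of \eqref{1-2}, that $u\in C^2(\hb)$, and that $|\nablah u|\in L^\infty(\overline{B_R^+})$ for every $R>0$; and then in transferring the geometric conclusion at the levels $y>0$ back to $y=0$.

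Boundedness of $u$ follows from the boundedness of $v$ via the maximum principle for \eqref{1-2}, while $u\in C^2(\hb)$ is immediate from interior Schauder theory, since $\divr_{\hb}(y^a\nabla_{\hb}u)=0$ is uniformly elliptic with smooth coefficients on compact subsets of $\hb$. The bound on $\nablah u$ up to $\{y=0\}$ inside every $B_R^+$ is the first genuinely delicate point: it uses the Heisenberg adaptation, as in \cite{FeFr}, of the Caffarelli-Silvestre boundary regularity theory, and it is precisely to guarantee this bound that the assumption $v\in C^{2,\sigma}(\h)$ with $\sigma\in(0,2s)$ is used.

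The main technical step is transferring the stability of $v$ for \eqref{1-1} to the stability of $u$ for \eqref{1-2}. Given an admissible test function $\xi$ as in Definition~\ref{def1}(I), set $\varphi:=\xi|_{y=0}$. The spectral construction of the lifting from \cite{FeFr} yields the variational identity
\[
\min_{\Xi|_{y=0}=\varphi}\int_{\hb}y^a|\nabla_{\hb}\Xi|^2=c_s\int_{\h}|(-\Delta_{\h})^{s/2}\varphi|^2,
\]
with a normalizing constant $c_s>0$ that can be absorbed into $f$. Combined with the stability of $v$, this gives
\[
\int_{\hb}y^a|\nabla_{\hb}\xi|^2\geq c_s\int_{\h}|(-\Delta_{\h})^{s/2}\varphi|^2\geq c_s\int_{\h}f'(v)\varphi^2=c_s\int_{\h}f'(u)\xi^2,
\]
which is \eqref{1-5}. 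A density argument is needed to ensure that, as $\xi$ ranges over the admissible class, the traces $\varphi$ are dense enough in $W^{s,2}_{\h}(\h)$ for Definition~\ref{def1} to be legitimately applied to them; I expect this to be the most subtle point of the whole reduction, since it requires controlling the interplay between the compactly supported test functions in $\R^4_+$ and the fractional Sobolev space on $\h$.

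Once $u$ satisfies all the hypotheses of Theorem~\ref{thm2}, the latter yields, for every $y>0$, that the level sets of $u(\cdot,y)$ restricted to the set where $\nablah u(\cdot,y)\neq 0$ are minimal in $\h$, that is, $h_{x,y}\equiv 0$ there. To pass the conclusion to $v$, fix a non-characteristic point $x_0$ of a level set of $v$, so that $\nablah u(x_0,0)=\nablah v(x_0)\neq 0$. By the continuity of $\nablah u$ up to the boundary established in the second paragraph, there is a neighborhood $U$ of $(x_0,0)$ in $\overline{\hb}$ on which $\nablah u\neq 0$; on $U\cap\{y>0\}$ the intrinsic mean curvature $h_{x,y}$ vanishes identically, and since $h_{x,y}$ depends continuously and algebraically on the first and second horizontal derivatives of $u$, letting $y\to 0^+$ shows that the intrinsic mean curvature of the level set of $v$ at $x_0$ is zero, which concludes the proof.
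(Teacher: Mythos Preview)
Your proposal is correct and follows the same overall strategy as the paper: verify that the lifting $u$ satisfies the hypotheses of Theorem~\ref{thm2}, apply that theorem, and then pass to the boundary $y\to 0^+$ using continuity of the horizontal derivatives.

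The technical verifications differ slightly in emphasis. For boundedness, the paper simply observes that $P_\h(\cdot,y)\in L^1(\h)$ and $v\in L^\infty(\h)$, so the convolution \eqref{4-5} is bounded directly; your maximum-principle route works but is less immediate. For the gradient bound \eqref{gradient_bound} and the $C^2$ continuity up to $\{y=0\}$, the paper argues via the explicit decay $|\partial^I P_\h(x,y)|\leq C\rho^{-2s-4-|I|}$ of the Poisson kernel (Remark~4.5 in \cite{FeFr}) together with the H\"older lemma preceding the proof, rather than invoking an abstract boundary-regularity theory; this is where the hypothesis $v\in C^{2,\sigma}$ with $\sigma\in(0,2s)$ enters concretely. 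For the stability transfer, the paper simply cites Remark~\ref{rmk1}, whereas you spell out the variational mechanism behind it; your argument is essentially the content of that remark, and the density issue you flag is real but standard. So the two proofs are the same in architecture, with the paper leaning more on the explicit Poisson-kernel formulas from \cite{FeFr} and you leaning more on abstract regularity and variational principles.
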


The paper is organized as follows: In Section~2 we recall the definition and the basic properties of the Heisenberg group, as well as the precise definition of the fractional sub-Laplacian involved in Eq.~\eqref{1-1}; we also discuss some regularity properties related to the degenerate elliptic problem \eqref{1-2}. In Section~3 we shall develop the analytical tools toward \eqref{1-7}, in particular one part of this inequality will be given in Theorem~\ref{thm6}; the geometry of the Heisenberg group will be fundamental in the proof of Theorem~\ref{thm1} at the end of Section~3 (see \cite[Section~2]{FeVa08}). Finally, Section~4 contains the application to the stable solutions in the entire space; we prove Theorem~\ref{thm2} and Theorem~\ref{thm3}.


\section{Preliminaries}

Let us briefly recall the definition and the basic properties of the Heisenberg group, so we will be able to precise the meaning of the fractional sub-Laplacian operator involved in \eqref{1-1}.

\subsection{The Heisenberg group}
Let $\h$ be the Heisenberg group, namely $\R^3$ endowed with the following non-commutative law: for every $(x_1,x_2,x_3),\ (y_1,y_2,y_3)\in\R^3$
\[(x_1,x_2,x_3)\circ(y_1,y_2,y_3)=(x_1+y_1,x_2+y_2,x_3+y_3+2(x_2y_1-x_1y_2)).\]
We shall denote $X=(1,0,2x_2)$ and $Y=(0,1,-2x_1)$. With the same notation we denote the two vector fields $X=\frac{\partial}{\partial x_1}+2x_2\frac{\partial}{x_3}$ and $Y=\frac{\partial}{\partial x_2}-2x_1\frac{\partial}{x_3}$ generating the algebra. We denote also by 
\[T:=[X,Y]=-4\frac{\partial}{\partial x_3}.\]
In particular, on each fiber $\mathcal{H}_P=\textrm{span}\{X,Y\}$ an internal product is given as follows: for every $U,\ V\in\mathcal{H}_P$, with $U=\alpha_1X+\beta_1Y$ and $V=\alpha_2X+\beta_2Y$, we have
\[
 \langle U,V\rangle_{\h}=\alpha_1\alpha_2+\beta_1\beta_2.
\]
This internal product makes the vectors $X$ and $Y$ orthonormal on $\mathcal{H}_P$. We shall denote the norm on $\mathcal{H}_P$ for every $U\in\mathcal{H}_P$ as 
\[|U|_{\h}=\sqrt{\langle U,X\rangle_{\h}^2+\langle U,Y\rangle_{\h}^2}.\]
No confusion should arise between the Euclidean objects $\langle\cdot,\cdot\rangle$ and $|\cdot|$ and the ones on the fibers in the Heisenberg group respectively denoted by $\langle\cdot,\cdot\rangle_{\h}$ and $|\cdot|_{\h}$.

For a smooth function $u$, we denote $\nablah u(P)=(Xu(P),Yu(P))$, where $Xu(P)$ and $Yu(P)$ are the coordinates of the vector $\nablah u(P)$ with respect to the basis given by $X$ and $Y$ at $P$. The vector $\nablah u$ is called the {\em intrinsic gradient} of $u$.

\begin{definition}\label{def2}
 \begin{enumerate}[(I)]
 \item We remind that a point $P\in\Sigma$ is {\em characteristic} for the $C^1$ level set $\Sigma$ of $u$ when the fiber in $P$  
 coincides with the Euclidean tangent space $\Sigma$ at $P$, namely $\mathcal{H}_P=T_P\Sigma$. In particular if $\nablah 
 u(P)\neq 0$, then $P$ is not characteristic. 
 \item Whenever $P\in\{u=k\}\cap\{\nablah u\neq 0\}$, one can consider the smooth surface $\{u=k\}$ and define 
 \[\nu= \frac{\nablah u(P)}{|\nablah u(P)|}.\]
 Usually, such $\nu$ is called the {\em intrinsic normal}. Associated with $\nu$, to any non-characteristic point $P\in\{u=k\}$,  
 there exists the so called {\em intrinsic unit tangent} direction to the level set $\{u=k\}$ at $P$ defined as 
 \[v=\frac{(Yu(P),-Xu(P))}{|\nablah u|},\]
 where the above coordinates are given with respect to the $(X,Y)$-frame. We observe that $\nu$ and $v$ are orthonormal in $\h$.
 \item The {\em intrinsic mean curvature} $h$, in a non-characteristic point $P\in\Sigma$ of the level surface given by $u$, is  
 defined as
 \[h=\mathrm{div}_{\h}\nu(P),\]
 while the {\em imaginary curvature} $p$ at the point $P\in\Sigma$ of the level surface $\Sigma$, given by $u$, is defined as  
 \[p=-\frac{Tu(P)}{|\nablah u(P)|}.\]
 \end{enumerate}
\end{definition}
\begin{remark}
For the notion of {\em intrinsic mean curvature} we refer to \cite{ArFe07,ArFe08,CDPT,ChHwMa,GaDaNh,Pa04}, while for the notion of {\em imaginary curvature} and its geometric meaning we refer to \cite{ArFe07,ArFe08}.
\end{remark}

The Kohn-Laplace operator on $\h$ is defined by \[\Delta_{\h}u=X^2u+Y^2u.\] Since a divergence operator is defined on each fiber, we can write
\[
 \Delta_{\h}u=\divr_{\h}(\nablah u)=X(Xu)+Y(Yu).
\]
With regards to problem \eqref{1-2}, we define $\hb:=\h\times\R_+$ and given $u$ and $h=(h_1,h_2,h_3)$ we denote
\[
\nabla_{\hb}u=(Xu,Yu,u_y),\quad\divr_{\hb}h=Xh_1+Yh_2+\partial_yh_3.
\]

We define the horizontal intrinsic Hessian matrix as 
\begin{equation*}
Hu=
 \begin{bmatrix}
 XXu&YXu\\
 XYu&YYu
 \end{bmatrix}.
\end{equation*}
Its norm is given by \[|Hu|=\sqrt{(XXu)^2+(YXu)^2+(XYu)^2+(YYu)^2}.\]
As usual, we set \[(Hu)^2=(Hu)(Hu)^T.\]

For any $\lambda>0$, the dilatation $\delta_\lambda:\h\rightarrow\h$ is defined as
\begin{equation}\label{2-1a}
\delta_\lambda(x_1,x_2,x_3)=(\lambda x_1,\lambda x_2,\lambda^2 x_3).
\end{equation}
Through this paper, by $\h$-homogeneity we mean homogeneity with respect to group dilatations $\delta_\lambda$.

The Haar measure of $\h=(\R^3,\cdot)$ is the Lebesgue measure $\mathcal{L}^3$ in $\R^3$. If $A\subset\h$ is $\mathcal{L}^3$-measurable, we write also $|A|:=\mathcal{L}^3(A)$. Moreover, if $m\geq 0$, we denote by $\Ha^m$ the m-dimensional Hausdorff measure obtained from the Euclidean distance in $\R^3\simeq\h$. 

We refer the reader to \cite[Chapter~5]{BoLaUg} for the definition of the Carnot-Carath\'eodory in $\h$ (cc-distance) $d_c(x,y)$. We shall denote $B_c(x,r)$ the open balls associated with $d_c$. The cc-distance is well behaved with respect to left translations and dilatations, that is
\[
d_c(z\circ x,z\circ y)=d_c(x,y), \quad d_c(\delta_\lambda(x),\delta_\lambda(y))=\lambda d_c(x,y)
\]
for $x,y,z\in\h$ and $\lambda>0$.

We also have 
\begin{equation}\label{2-1b}
|B_c(x,r)|=r^4|B_c(0,1)| \quad \text{and}\quad |\partial B_c(x,r)|=r^3|\partial B_c(0,1)|
\end{equation}
(recall that $4=$ homogeneous dimension of $\h$).

We can define a group convolution in $\h$: if, for instance, $f\in\mathcal{D}(\h)$ and $g\in L^1_{\mathrm{loc}}(\h)$, we set 
\begin{equation}\label{2-2}
f\ast g(x):=\int_\h \! f(y)g(y^{-1}\circ x)\,\di y \quad \text{for } x\in\h
\end{equation}
(here $y^{-1}$ denotes the inverse in $\h$). We remind that the convolution is well defined when $f,g\in\mathcal{D}'(\h)$, provided at least one of them has compact support.

\subsection{Fractional powers of sub-elliptic Laplacians}

Here, we collect some results on fractional powers of sub-Laplacian in the Heisenberg group (see \cite{FeFr,Fo75}).

To begin with, let us characterize $(-\Delta_\h)^s$ as the spectral resolution of $\Delta_\h$ in $L^2(\h)$ (see \cite[Theorem~3.10]{FeFr} and \cite[Section~3]{Fo75}).

\begin{theorem}
The operator $\Delta_\h$ is a positive self-adjoint operator with domain $W_\h^{2,2}(\h)$. Denote now by $\{E(\lambda)\}$ the spectral resolution of $\Delta_\h$ in $L^2(\h)$. If $\alpha>0$ then
\[
(-\Delta_\h)^{\alpha/2}=\int_0^{+\infty}\! \lambda^{\alpha/2} \,\di E(\lambda)
\]
with domain
\[
W_{\h}^{\alpha,2}(\h):=\{v\in L^2(\h):\ \int_0^{+\infty}\! \lambda^\alpha\di \langle E(\lambda)v,v\rangle<\infty\},
\]
endowed with the graph norm.
\end{theorem}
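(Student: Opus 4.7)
The plan is to produce $(-\Delta_\h)^{\alpha/2}$ as a function of a concrete non-negative self-adjoint operator on $L^2(\h)$, so the first task is to upgrade the symmetric differential expression $\Delta_\h = X^2 + Y^2$, initially defined on the dense core $C_c^\infty(\h)$, to a self-adjoint operator. I would start from the quadratic form
\[
Q(u,v) = \int_\h \langle \nablah u, \nablah v\rangle_\h, \qquad u,v \in C_c^\infty(\h).
\]
Since the formal $L^2$-adjoints of the left-invariant fields $X$ and $Y$ are $-X$ and $-Y$ (they carry no zeroth-order term), integration by parts gives $Q(u,u) = -\langle \Delta_\h u, u\rangle \geq 0$ for $u \in C_c^\infty(\h)$, so $-\Delta_\h$ is symmetric and non-negative on this core. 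The form $Q$ is therefore closable, its closure is defined on the horizontal Sobolev space $W_\h^{1,2}(\h)$, and by the Friedrichs construction it determines a unique non-negative self-adjoint operator $A$ with form domain $W_\h^{1,2}(\h)$.

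The main technical point is to identify $D(A) = W_\h^{2,2}(\h)$, with $A = -\Delta_\h$ on this domain. The two operators agree on $C_c^\infty(\h)$ by construction, so what is left to show is that $C_c^\infty(\h)$ is a core for both. Density of $C_c^\infty(\h)$ in $W_\h^{2,2}(\h)$ follows by convolution with an $\h$-homogeneous approximate identity via \eqref{2-2}. The identification $D(A) = W_\h^{2,2}(\h)$ then rests on the subelliptic \emph{a priori} estimate
\[
\|XXu\|_{L^2}^2 + \|XYu\|_{L^2}^2 + \|YXu\|_{L^2}^2 + \|YYu\|_{L^2}^2 \leq C\bigl(\|u\|_{L^2}^2 + \|\Delta_\h u\|_{L^2}^2\bigr), \qquad u \in C_c^\infty(\h),
\]
available because $\{X,Y,[X,Y]=T\}$ spans the Lie algebra of $\h$, so that H\"ormander's hypoellipticity theorem and the Folland--Stein calculus on stratified groups apply. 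I expect this to be the principal obstacle, since the Fourier tools that make the Euclidean analogue almost trivial are unavailable and must be replaced by the full subelliptic regularity machinery on $\h$.

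With $-\Delta_\h$ realized as a non-negative self-adjoint operator with spectrum contained in $[0,+\infty)$, the spectral theorem supplies the resolution of the identity $\{E(\lambda)\}_{\lambda\geq 0}$. Applying the Borel functional calculus to the map $\lambda \mapsto \lambda^{\alpha/2}$ yields
\[
(-\Delta_\h)^{\alpha/2} = \int_0^{+\infty} \lambda^{\alpha/2}\,\di E(\lambda),
\]
and the standard description of the domain of a function of a self-adjoint operator identifies the maximal domain of $(-\Delta_\h)^{\alpha/2}$ with $\{v \in L^2(\h) : \int_0^{+\infty} \lambda^\alpha \,\di\langle E(\lambda)v, v\rangle < \infty\}$, which is by definition $W_\h^{\alpha,2}(\h)$; completeness under the graph norm is automatic for the domain of any closed operator, concluding the proof.
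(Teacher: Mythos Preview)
The paper does not prove this theorem at all: it is stated as a quotation from the literature, with an explicit pointer to \cite[Theorem~3.10]{FeFr} and \cite[Section~3]{Fo75}, and no argument is given in the text. So there is no ``paper's own proof'' to compare against.

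That said, your outline is a correct sketch of the standard argument one finds in those references. The Friedrichs extension of the non-negative symmetric form $Q(u,v)=\int_\h\langle\nablah u,\nablah v\rangle_\h$ on $C_c^\infty(\h)$ produces the self-adjoint realization, and the identification of its domain with $W_\h^{2,2}(\h)$ does rest precisely on the global subelliptic $L^2$ estimate you wrote down; this is the content of Folland's work \cite{Fo75} on stratified groups (his Theorem~4.9 gives the needed a priori bound via the singular-integral calculus for homogeneous convolution kernels, which plays the role of the Fourier transform here). Once self-adjointness and non-negativity are in hand, the functional-calculus step defining $(-\Delta_\h)^{\alpha/2}$ and its domain is, as you say, routine spectral theory. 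One small remark: you correctly note that it is $-\Delta_\h$ that is non-negative; the paper's phrasing ``$\Delta_\h$ is a positive self-adjoint operator'' is a slight abuse of sign conventions that you have silently repaired.
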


Before giving a more ``explicit'' expression of the fractional sub-Laplacian, we recall some definitions. Denote by $h=h(t,x)$ the fundamental solution of $\Delta_\h+\partial/\partial t$ (see \cite[Proposition~3.3]{Fo75}). For all $0<\beta<4$ the integral 
\[
R_\beta(x)=\frac{1}{\Gamma(\beta/2)}\int_0^{+\infty}\! t^{\frac{\beta}{2}-1}h(t,x)\, \di t
\]
converges absolutely for $x\neq 0$. 

If $\beta<0, \beta\notin\{0,-2.-4,...\}$, then
\[
\widetilde{R}_\beta(x)=\frac{\frac{\beta}{2}}{\Gamma(\beta/2)}\int_0^{+\infty}\! t^{\frac{\beta}{2}-1}h(t,x)\, \di t
\]
defines a smooth function in $\h\setminus\{0\}$, since $t\mapsto h(t,x)$ vanishes of infinite order as $t\to 0$ if $x\neq 0$. In addition, $\widetilde{R}_{\beta}$ is positive and $\h$-homogeneous of degree $\beta-4$.

We also set
\begin{equation}\label{rho-norm}
\rho(x)=R^{-\frac{1}{2+\alpha}}_{2-\alpha}(x),\quad 0<\alpha<2.
\end{equation}
$\rho$ is an $\h$-homogeneous norm in $\h$, smooth outside of the origin. In addition, $d(x,y):=\rho(y^{-1}\circ x)$ is a quasi-distance in $\h$. In turn, $d$ is equivalent to the Carnot-Carath\'eodoty distance on $\h$, as well as to any other $\h$-homogeneous left invariant distance on $\h$.

Recall that, as usual, $\Sch$ denotes the Schwartz space of rapidly decreasing $C^\infty$ functions. We have the following representation formula:
\begin{theorem}[\cite{FeFr}, Theorem~3.11]\label{thm4}
For every $v\in\Sch(\h)$, $(-\Delta_\h)^sv\in L^2(\h)$ and
\begin{equation*}
 \begin{aligned}
 (-\Delta_\h)^sv(x)&=\int_{\h}\! \left(v(x\circ y)-v(x)-\omega(y)\langle\nablah v(x),y\rangle\right)\widetilde{R}_{-2s}(y)\,\di y\\
 &=\PV \int_{\h}\! (v(y)-v(x))\widetilde{R}_{-2s}(y^{-1}\circ x)\, \di y,
 \end{aligned}
\end{equation*}
where $\omega$ is the characteristic function of the unit ball $B_\rho(0,1)$.
\end{theorem}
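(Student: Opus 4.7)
The plan is to derive the representation by subordination, reducing the identity to an integral formula for the heat semigroup generated by $\Delta_{\h}$ and then recognizing the resulting time integral as $\widetilde{R}_{-2s}$. The starting point is the scalar Balakrishnan identity
\[
\lambda^{s}=\frac{s}{\Gamma(1-s)}\int_{0}^{+\infty}\! t^{-s-1}\bigl(1-e^{-t\lambda}\bigr)\,\di t,\qquad \lambda>0,
\]
applied to the spectral resolution of $-\Delta_{\h}$ on $L^{2}(\h)$. Since $v\in\Sch(\h)\subset W_{\h}^{2s,2}(\h)$, the spectral theorem immediately gives both $(-\Delta_{\h})^{s}v\in L^{2}(\h)$ and
\[
(-\Delta_{\h})^{s}v(x)=\frac{s}{\Gamma(1-s)}\int_{0}^{+\infty}\! t^{-s-1}\bigl(v(x)-e^{t\Delta_{\h}}v(x)\bigr)\,\di t.
\]

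Next I would write the heat semigroup as a group convolution with the heat kernel, $e^{t\Delta_{\h}}v(x)=\int_{\h}v(x\circ y)h(t,y)\,\di y$, substitute it into the previous formula and exchange the $t$- and $y$-integrations by Fubini. Reading off the inner integral from the definition
\[
\widetilde{R}_{-2s}(y)=\frac{-s}{\Gamma(-s)}\int_{0}^{+\infty}\! t^{-s-1}h(t,y)\,\di t,
\]
together with the reflection formula $\Gamma(1-s)=-s\,\Gamma(-s)$, produces, at least at the formal level, the representation
\[
(-\Delta_{\h})^{s}v(x)=\int_{\h}\bigl(v(x\circ y)-v(x)\bigr)\widetilde{R}_{-2s}(y)\,\di y.
\]

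The delicate point is that $\widetilde{R}_{-2s}$ is $\h$-homogeneous of degree $-2s-4$, so it fails to be locally integrable near $y=0$, and the exchange above must be performed with care. The remedy is to insert the horizontal Taylor correction $\omega(y)\langle\nablah v(x),y\rangle$: this lowers the order of the singularity at the origin to an integrable one, while the added term integrates to zero on $B_{\rho}(0,1)$ by symmetry. Indeed, the evenness $h(t,y)=h(t,y^{-1})$ of the heat kernel on $\h$ transfers to $\widetilde{R}_{-2s}(y)=\widetilde{R}_{-2s}(y^{-1})$, and the inversion $y\mapsto y^{-1}$ flips the sign of the horizontal coordinates of $y$; hence the first-order horizontal moments of $\widetilde{R}_{-2s}$ on $B_\rho(0,1)$ vanish. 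This yields the first equality of the theorem.

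For the principal-value form I would change variables via $z=x\circ y$, use left-invariance of the Haar measure, and rewrite the kernel as $\widetilde{R}_{-2s}(y^{-1}\circ x)$ by means of the same inversion symmetry; the Taylor correction can then be dropped in the $\PV$ sense by the same vanishing-moment argument applied on shrinking balls $B_\rho(x,\varepsilon)$. The main obstacle I expect in making this rigorous is the simultaneous control of the $t$-integration and of the principal-value limit in $y$: both steps need Gaussian-type upper bounds for $h(t,\cdot)$ on $\h$, available from \cite{Fo75}, together with a Calder\'on--Zygmund style truncation to justify Fubini and the exchange of limits. Once those bounds are in place, what remains is routine bookkeeping with the homogeneity of $\widetilde{R}_{-2s}$ and with the Schwartz-class decay of $v$.
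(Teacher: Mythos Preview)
The paper does not prove Theorem~\ref{thm4}: it is quoted verbatim from \cite[Theorem~3.11]{FeFr} and used as a black box, so there is no proof here to compare against. Your outline via Balakrishnan subordination, the heat semigroup representation $e^{t\Delta_{\h}}v=v\ast h(t,\cdot)$, and the identification of the time integral with $\widetilde{R}_{-2s}$ is exactly the route taken in \cite{FeFr}, and the technical points you flag (justifying Fubini via Gaussian bounds on $h(t,\cdot)$, inserting the first-order Taylor correction to tame the $\rho^{-2s-4}$ singularity, and using the inversion symmetry $\widetilde{R}_{-2s}(y)=\widetilde{R}_{-2s}(y^{-1})$ to kill the odd moments) are the right ones.

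One small caution on bookkeeping: when you pass from $\frac{s}{\Gamma(1-s)}\int_{0}^{\infty}t^{-s-1}(v-e^{t\Delta_{\h}}v)\,\di t$ to the spatial integral against $\widetilde{R}_{-2s}$, track the constants carefully. With the normalization $\widetilde{R}_{-2s}=\frac{-s}{\Gamma(-s)}\int_{0}^{\infty}t^{-s-1}h(t,\cdot)\,\di t$ and $\Gamma(1-s)=-s\,\Gamma(-s)$, the factors do not cancel to $1$ as your ``formal level'' display suggests; you should find the identity holds with the sign and normalization fixed by the conventions in \cite{FeFr} (and the statement as written in the present paper carries a sign that differs from the usual Euclidean convention $u(x)-u(y)$). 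This is cosmetic, not a gap, but worth getting right before calling the argument complete.
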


\subsection{A Poisson Kernel}
With a natural notion of group convolution, the Heisenberg group makes possible to recover, starting from the abstract representation in terms of spectral resolution, another explicit form of the fractional power in terms of the convolution with suitable Poisson kernel (see \cite[Theorem~4.4]{FeFr}). 

If $v\in L^2(\h)$ and $y>0$ (recall that $-1<a<1$), we set
\[
u(\cdot,y):=\phi(\theta y^{1-a}(-\Delta_\h)^{(1-a)/2})v:=\int_0^{+\infty}\! \phi(\theta y^{1-a}\lambda^{(1-a)/2})\, \di E(\lambda)v, 
\]
where $\theta:=(1-a)^{a-1}$ and $\phi:[0,\infty)\rightarrow\R$ solves the boundary value problem
\begin{equation*}
 \left\{
  \begin{aligned}
  -t^\alpha\phi''+\phi&=0,\\
  \phi(0)&=1,\\
  \lim_{t\to+\infty}\phi(t)&=0,
  \end{aligned}
 \right.
\end{equation*}
($\alpha=-\frac{2a}{1-a}$).

We denote by $h(t,\cdot)$ the heat kernel associated with $-\Delta_\h$ as in \cite[Proposition~3.3]{Fo75}, and by $P_{\h}(\cdot,y)$ the ``Poisson kernel''
\begin{equation}\label{2-3}
P_\h(\cdot,y):= C_a y^{1-a}\int_0^\infty \! t^{(a-3)/2}\e^{-\frac{y^2}{4t}}h(t,\cdot)\,\di t,
\end{equation}
where
\[
C_a=\frac{2^{a-1}}{\Gamma((1-a)/2)}.
\]
Then
\[
 P_\h(\cdot,y)\geq 0
\]
and
\begin{equation}\label{2-4}
u(\cdot,y)=v\ast P_\h(\cdot,y).
\end{equation}

\begin{remark}\label{rmk1}
We note that $v\in C^2(\R^3)\cap L^\infty(\R^3)$ is a stable solution of \eqref{1-1} if and only if its lifting $u(\cdot,y)=v\ast P_\h(\cdot,y)$ is a stable solution of \eqref{1-2}.
\end{remark}

\subsection{Regularity theory for \eqref{1-1} and \eqref{1-2}}

Some classical pointwise estimates: the Harnack inequality and the H\"older continuity of the weak solutions (De Giorgi-Nash-Moser theorem), can be extended to a class of strongly degenerate elliptic operators of the second order, like that in \eqref{1-2}, see \cite{FeFr,Lu92}. 

We state a result which let us control further derivatives in $x$. Basically, this is possible thanks to the fact that the operator is independent of the variable $x\in\h$. 

\begin{lemma}\label{lemma1}
Let $u$ be a bounded weak solution of \eqref{1-2}. Then,
\[
y^a|\nabla_{\hb} Xu|^2,y^a|\nabla_{\hb} Yu|^2\in L^1(B_R^+) 
\]
for every $R>0$.
\end{lemma}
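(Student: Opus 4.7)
The plan is to perform a difference-quotient/Caccioppoli argument along the one-parameter subgroups of $\h$, combined with the left-translation invariance of the degenerate operator in \eqref{1-2}. Since $X,Y$ are left-invariant and the weight $y^a$ does not depend on $x$, for every $h\in\h$ the translate $u_h(x,y):=u(h\circ x,y)$ is a weak solution of \eqref{1-2} with boundary datum $f(u_h)$. The one-parameter subgroups $\delta\mapsto\delta e_i$ ($i=1,2,3$) are the flows of the right-invariant fields
\[
\tilde X=\partial_{x_1}-2x_2\partial_{x_3},\qquad \tilde Y=\partial_{x_2}+2x_1\partial_{x_3},\qquad \tilde T=\partial_{x_3},
\]
so the difference quotients $w^i_\delta(x,y):=\delta^{-1}\bigl(u((\delta e_i)\circ x,y)-u(x,y)\bigr)$ satisfy, weakly, $\divr_{\hb}(y^a\nabla_{\hb}w^i_\delta)=0$ in $B_{2R}^+$ with Neumann datum $g^i_\delta:=\delta^{-1}(f(u_{\delta e_i})-f(u))$ obeying $|g^i_\delta|\le L|w^i_\delta|$, because $f\in C^{1,\gamma}$ and $u$ is bounded.

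Next I would test this equation against $\xi^2 w^i_\delta$ for a cutoff $\xi$ admissible in the sense of \eqref{1-3}--\eqref{1-4}, identically $1$ on $B_R^+$ and supported in $B_{2R}^+$. Young's inequality, combined with the weighted trace embedding of $W^{1,2}(y^a,B_{2R}^+)$ into $L^2(B_{2R}\cap\h)$ (see \cite{CaSi,FeFr}), should yield the Caccioppoli-type bound
\[
\int_{B_R^+}y^a|\nabla_{\hb}w^i_\delta|^2\;\le\; C\int_{B_{2R}^+}y^a\bigl(|\nabla_{\hb}\xi|^2+\xi^2\bigr)|w^i_\delta|^2,
\]
with $C$ depending only on $R$, $\|f'\|_{L^\infty}$, $\|u\|_{L^\infty}$ and the trace constant. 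Passing to the weak limit $\delta\to 0$ using lower semicontinuity in $L^2(y^a)$ then yields $y^a|\nabla_{\hb}\tilde Xu|^2,\; y^a|\nabla_{\hb}\tilde Yu|^2,\; y^a|\nabla_{\hb}\tilde Tu|^2\in L^1(B_R^+)$, and the identities
\[
Xu=\tilde Xu+4x_2\,\tilde Tu,\qquad Yu=\tilde Yu-4x_1\,\tilde Tu,
\]
together with the boundedness of $x_1,x_2$ and of $\nabla_{\hb}x_1=(1,0,0),\,\nabla_{\hb}x_2=(0,1,0)$ on $B_R^+$, give the stated conclusion via the Leibniz rule.

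The main obstacle I anticipate is securing the uniform-in-$\delta$ bound on $\|w^i_\delta\|_{L^2(y^a,B_{2R}^+)}$. For $i=1,2$ this should reduce to Lipschitz regularity of $u$ in the horizontal $\tilde X,\tilde Y$ directions, following from the H\"older/Schauder theory for the degenerate operator (see \cite{FeFr,Lu92}). For $i=3$, however, the vertical difference quotient $w^3_\delta$ targets the non-horizontal derivative $\partial_{x_3}u$, whose $L^2(y^a)$-integrability is not immediate from the weak-solution hypothesis; this step should require a bootstrap exploiting the interior hypoellipticity of the operator (H\"ormander condition for $\{X,Y,\partial_y\}$ via $[X,Y]=T$) together with the boundary reaction term.
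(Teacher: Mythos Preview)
Your route differs from the paper's in one structural choice that creates all the downstream difficulty. You left-translate, setting $w^i_\delta(x,y)=\delta^{-1}\bigl(u((\delta e_i)\circ x,y)-u(x,y)\bigr)$; this does preserve weak solutions exactly (left-invariance of $X,Y$), but the limits are the \emph{right}-invariant derivatives $\tilde Xu,\tilde Yu,\tilde Tu$, and you must then reconstruct $Xu,Yu$ via $X=\tilde X+4x_2\tilde T$, $Y=\tilde Y-4x_1\tilde T$. The paper instead right-translates,
\[
u_h(x,y)=\frac{u(x\circ(he_1),y)-u(x,y)}{h},
\]
whose limit is $Xu$ itself (Proposition~1.2.11 in \cite{BoLaUg}). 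Only the two horizontal directions $e_1,e_2$ are used; no vertical difference quotient ever appears, and your conversion step is absent.

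The second difference is how the Caccioppoli right-hand side is controlled. The paper does not appeal to any trace inequality or regularity bootstrap: it simply invokes the standing hypothesis \eqref{gradient_bound}, namely $|\nablah u|\in L^\infty(\overline{B_R^+})$, which is assumed in Theorem~\ref{thm1} where the lemma is used. This gives $|u_h|\le C$ and $|[f(u)]_h|\le C$ uniformly in $h$, so the bound
\[
\int_{\hb}y^a\tau^2|\nabla_{\hb}u_h|^2\le C\Bigl(\int_{B_{R+1}^+}y^a u_h^2+\int_{\{|x|\le R\}\times\{0\}}|[f(u)]_h u_h|\Bigr)
\]
is uniform, and Fatou's lemma closes the argument.

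Your gap at $i=3$ is therefore real and avoidable. A uniform bound on $\|w^3_\delta\|_{L^2(y^a,B_{2R}^+)}$ amounts to controlling $\partial_{x_3}u=-\tfrac14[X,Y]u$ up to the boundary, a second-order horizontal quantity that does not follow from the stated hypotheses; and even after the Caccioppoli step you would need $y^a|\nabla_{\hb}\tilde Tu|^2\in L^1$ in order to apply the Leibniz rule in $X=\tilde X+4x_2\tilde T$, which is a \emph{third}-order horizontal estimate, strictly stronger than the lemma's conclusion. Switching to right-translation as the paper does, and using \eqref{gradient_bound} rather than a trace embedding, eliminates both obstacles.
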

\begin{proof}
Given $R>0$, let us prove that $y^a|\nabla_{\hb} Xu|^2\in L^1(B_R^+)$. 

We consider the incremental quotient
\[
u_h(x,y)=\frac{u(x\circ(he_1),y)-u(x,y)}{h} \quad\text{for all }(x,y)\in\hb,
\]
where $e_1=(1,0,0)$. Recall that (see Proposition~1.2.11 in \cite{BoLaUg}) 
\begin{equation}\label{2-5}
\lim_{h\to 0}u_h(x,y)=Xu(x,y) \quad \text{for all }(x,y)\in\hb.
\end{equation}
Thanks to \eqref{gradient_bound} and the smoothness of $f$, we have
\begin{equation}\label{2-6}
[f(u)]_h\leq C
\end{equation}
for some $C>0$.

Let now $\xi$ be as requested in \eqref{1-3}. We have
\begin{align*}
\int_{\hb}y^a\langle\nabla_{\hb}u_h,\nabla_{\hb}\xi\rangle_{\hb}-\int_{\h}[f(u)]_h\xi&=-\int_{\hb}y^a\langle\nabla_{\hb}u,\nabla_{\hb}\xi_{-h}\rangle_{\hb}+\int_{\h}f(u)\xi_{-h}\\
&=0.
\end{align*}
We now consider a smooth cut-off function $\tau$ such that $0\leq\tau\in C_0^\infty(B_{R+1})$, with $\tau=1$ in $B_R$ and $\nabla\tau\leq 2$. Taking $\xi:=u_h\tau^2$ in the above expression, we find that
\begin{equation}\label{2-7}
2\int_{\hb}y^a\tau u_h\langle\nabla_{\hb}u_h,\nabla_{\hb}\tau\rangle_{\hb}+\int_{\hb}y^a\tau^2|\nabla_{\hb}u_h|^2=\int_{\h}[f(u)]_hu_h\tau^2.
\end{equation}
Note that $\xi$ satisfies \eqref{1-4} thanks to \eqref{regularity} and $u\in L^\infty_{\mathrm{loc}}(\overline{\R^4_+})$.

Now, by Cauchy-Schwarz inequality, we have
\begin{align*}
\int_{\hb}y^a\tau u_h\langle\nabla_{\hb}u_h,\nabla_{\hb}\tau\rangle_{\hb} \geq & -\frac{\epsilon}{2}\int_{\hb}y^a\tau^2|\nabla_{\hb}u_h|^2\\
&-\frac{1}{2\epsilon}\int_{\hb}y^au_h^2|\nabla_{\hb}\tau|^2
\end{align*}
for any $\epsilon>0$. Choosing $\epsilon$ small, \eqref{2-7} reads
\begin{equation}
\int_{\hb}y^a\tau^2|\nabla_{\hb}u_h|^2\leq C\left(\int_{B_{R+1}^+}\!y^au_h^2+\int_{\{|x|\leq R\}\times\{y=0\}}\!|[f(u)]_hu_h|\right)
\end{equation} 
for some $C>0$. This inequality, together \eqref{gradient_bound} and \eqref{2-6}, allows to control 
\[
\int_{\hb}y^a\tau^2|\nabla_{\hb}u_h|^2
\]
uniformly in $h$.

By sending $h\to 0$ and using Fatou lemma (recall also \eqref{2-5}), we obtain the desired claim.
\end{proof}


\section{Analytic and geometric inequalities}

In this section we develop the analytical and geometrical tools toward \eqref{1-7}, we follow the ideas in \cite{FeVa08,SiVa}. We summarize the main points of the argument and omit some technical computations.

\subsection{Analytical computations}

We start with two lemmas, the fist one is a version in the Heisenberg group of a classical result (see \cite{FeVa08}):

\begin{lemma}\label{lemma2}
Let $c\in\R$. Suppose that $\Omega$ is an open domain of $\hb$ and that $w:\Omega\rightarrow\R$ is Lipschitz with respect to the metric structure of $\hb$. Then, $\nabla_{\hb} w=0$ for almost any $x\in\{w=c\}$.
\end{lemma}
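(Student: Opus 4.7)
The plan is to reduce the statement to the classical one-dimensional Stampacchia lemma (if $g\colon I\to\R$ is Lipschitz then $g'=0$ a.e.\ on $\{g=c\}$), applied component by component along the flows of the three generating vector fields $X$, $Y$ and $\partial_y$ that together span $\nabla_{\hb}$.

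First I would observe that $X$, $Y$ and $\partial_y$ are smooth vector fields whose integral curves, parametrized by arc length, are $1$-Lipschitz paths with respect to the metric structure of $\hb$ (for $X$ and $Y$ this is the very definition of the cc-distance on $\h$; for $\partial_y$ it is the Euclidean factor). Consequently, for each base point $\zeta\in\Omega$, the scalar function $s\mapsto w(\Phi_s^X(\zeta))$ is Lipschitz on the maximal interval for which the trajectory stays in $\Omega$, and the analogous statement holds for the flows of $Y$ and of $\partial_y$. In particular the horizontal derivatives $Xw,\,Yw$ and the vertical derivative $w_y$ exist a.e.\ on $\Omega$ as $L^\infty_{\mathrm{loc}}$ functions.

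Next, I would fix one of the three vector fields, say $X$, and work locally: choose a smooth hypersurface $\Sigma\subset\Omega$ transverse to $X$ and use
\begin{equation*}
(s,\zeta)\longmapsto \Phi_s^X(\zeta)
\end{equation*}
as a local diffeomorphism onto a neighborhood in $\Omega$, whose Jacobian is smooth and bounded away from zero. For each fixed $\zeta\in\Sigma$ the one-dimensional Stampacchia lemma gives
\begin{equation*}
\tfrac{d}{ds} w(\Phi_s^X(\zeta)) \;=\; Xw(\Phi_s^X(\zeta)) \;=\; 0 \qquad\text{for a.e.\ } s \text{ with } w(\Phi_s^X(\zeta))=c.
\end{equation*}
Integrating in $\zeta$ via Fubini and pulling back through the chart yields $Xw=0$ a.e.\ on $\{w=c\}$ inside that neighborhood. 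The same argument with $Y$ and with $\partial_y$ in place of $X$ gives $Yw=w_y=0$ a.e.\ on $\{w=c\}$, so $\nabla_{\hb}w=0$ there.

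The main obstacle is bookkeeping rather than analytical: one must verify that the slice measure appearing in Fubini is equivalent to the push-forward of Lebesgue measure via the flow chart (immediate from the smoothness and non-degeneracy of the Jacobian of $\Phi^X$, and similarly for $\Phi^Y$, $\Phi^{\partial_y}$), and that the exceptional null set of transversal parameters does not conspire to yield a positive-measure subset of $\{w=c\}$ (again ruled out by Fubini). Once these measurability checks are settled for a single chart, $\Omega$ is covered by countably many such charts and the local conclusion globalizes. This is exactly the strategy implemented in the Euclidean setting in \cite{FeVa08}, adapted here to the three generators $X$, $Y$, $\partial_y$ of $\nabla_{\hb}$.
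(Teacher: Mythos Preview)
Your proposal is correct. The paper does not actually supply a proof of this lemma: it merely states it as ``a version in the Heisenberg group of a classical result (see \cite{FeVa08})'' and moves on. Your argument---reducing to the one-dimensional Stampacchia lemma along the integral curves of $X$, $Y$, and $\partial_y$, then reassembling via Fubini through a flow-box chart---is exactly the route taken in \cite{FeVa08} (Lemma~2.3 there), so you are in complete agreement with the source the paper defers to.
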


And the second one, an elementary observation.

\begin{lemma}\label{lemma3}
Let $u$ be as in Theorem~\ref{thm1}. Assume that $\xi\in C^\infty(\R^4_+,\R)$ and vanishes outside a ball. Then
 \begin{equation}\label{3-1}
 \int_{\hb}y^a\langle\nabla_{\hb}u,\nabla_{\hb}X\xi\rangle_{\hb}=\int_{\hb}
 y^a(-\langle\nabla_{\hb}Xu,\nabla_{\hb}\xi\rangle_{\hb}+2TYu\xi)
 \end{equation}
and
 \begin{equation}\label{3-2}
 \int_{\hb}y^a\langle\nabla_{\hb}u,\nabla_{\hb}Y\xi\rangle_{\hb}=\int_{\hb}
 y^a(-\langle\nabla_{\hb}Yu,\nabla_{\hb}\xi\rangle_{\hb}-2TXu\xi).
 \end{equation}
\end{lemma}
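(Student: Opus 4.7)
The plan is to prove both identities by integration by parts in the $x$-variables, exploiting three structural facts about the Heisenberg group. First, the fields $X=\partial_{x_1}+2x_2\partial_{x_3}$, $Y=\partial_{x_2}-2x_1\partial_{x_3}$ and $T=-4\partial_{x_3}$ all have vanishing Euclidean divergence on $\R^3$, hence each is formally anti-self-adjoint with respect to Lebesgue measure on $\h$, and integration by parts in $x$ produces no boundary contributions because $\xi$ has compact support. Second, the coefficients of $X$, $Y$ and $T$ are $y$-independent, so these fields commute with $\partial_y$ (in particular $(X\xi)_y=X(\xi_y)$) and pass through the weight $y^a$. Third, the only nontrivial bracket is $[X,Y]=T$, while $[X,T]=[Y,T]=0$.

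For \eqref{3-1} I would expand
\[
\langle\nabla_{\hb}u,\nabla_{\hb}X\xi\rangle_{\hb}=Xu\cdot X(X\xi)+Yu\cdot Y(X\xi)+u_y\cdot X(\xi_y).
\]
The first and third summands integrate by parts cleanly in $x$, yielding $-\int_{\hb}y^a\,XXu\cdot X\xi$ and $-\int_{\hb}y^a\,(Xu)_y\,\xi_y$ respectively. The delicate piece is $\int_{\hb}y^a\,Yu\cdot YX\xi$. I first rewrite $YX\xi=XY\xi-T\xi$. The $T\xi$-piece, integrated by parts in $T$ (using $[Y,T]=0$), contributes $+\int_{\hb}y^a\,TYu\cdot\xi$. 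For the $XY\xi$-piece I integrate by parts in $X$ to get $-\int_{\hb}y^a\,XYu\cdot Y\xi$ and then apply the commutator once more via $XYu=YXu+Tu$; the resulting $Tu$-term, after one final integration by parts in $Y$, produces a \emph{second} copy of $+\int_{\hb}y^a\,TYu\cdot\xi$. Assembling all three summands and recognizing $XXu\cdot X\xi+YXu\cdot Y\xi+(Xu)_y\,\xi_y=\langle\nabla_{\hb}Xu,\nabla_{\hb}\xi\rangle_{\hb}$ gives exactly \eqref{3-1}.

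Identity \eqref{3-2} follows from the symmetric computation with the roles of $X$ and $Y$ interchanged; the bracket $[Y,X]=-T$ is responsible for the sign flip in $-2TXu$. The only mild obstacle is bookkeeping: the factor $2$ in front of $TYu\,\xi$ is not produced by a single commutator application but by two independent reorderings (the initial swap $YX=XY-T$ acting on $\xi$, and the subsequent swap $XYu=YXu+Tu$ performed after the integration by parts in $X$), each yielding one copy of $\int_{\hb}y^a\,TYu\cdot\xi$. All required integrability for the manipulations is guaranteed by the $C^2$-regularity of $u$, the local bound \eqref{gradient_bound}, the weighted $L^2$ control of second horizontal derivatives given by Lemma~\ref{lemma1}, and the compact support of $\xi$.
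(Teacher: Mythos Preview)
Your proposal is correct and follows essentially the same approach as the paper: both arguments expand the inner product, integrate by parts the $XX$- and $\partial_y$-terms directly, and handle the mixed $YX$-term via the commutator $[X,Y]=T$ together with the anti-self-adjointness of $X$, $Y$, $T$. The only cosmetic difference is that the paper inserts the zero $-XYu\,Y\xi-Yu\,XY\xi$ (valid after one integration by parts in $X$) to expose both commutators at once, whereas you apply the swaps $YX=XY-T$ on $\xi$ and $XY=YX+T$ on $u$ sequentially; either way one obtains two copies of $\int_{\hb}y^a\,TYu\,\xi$.
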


\begin{proof}
Using integration by parts we deduce that
 \begin{align*}
 \int_{\hb}y^a\langle\nabla&_{\hb}u,\nabla_{\hb}X\xi\rangle_{\hb}=\int_{\hb}
 y^a (XuXX\xi+YuYX\xi+\partial_yu\partial_y(X\xi))\\
 &=\int_{\hb}y^a(-XXuX\xi+YuYX\xi-\partial_yXu\partial_y\xi)\\
 &=\int_{\hb}y^a(-\langle\nabla_{\hb}Xu,\nabla_{\hb}\xi\rangle_{\hb}+YXuY\xi+YuYX\xi)\\
 &=\int_{\hb}y^a(-\langle\nabla_{\hb}Xu,\nabla_{\hb}\xi\rangle_{\hb}+YXuY\xi-XYuY\xi+YuYX\xi-YuXY\xi)\\
 &=\int_{\hb}y^a(-\langle\nabla_{\hb}Xu,\nabla_{\hb}\xi\rangle_{\hb}-TuY\xi-YuT\xi)\\
 &=\int_{\hb}y^a(-\langle\nabla_{\hb}Xu,\nabla_{\hb}\xi\rangle_{\hb}+2TYu\xi)
\end{align*}
(recall that $TX=XT$ and $TY=YT$). The proof of \eqref{3-2} is similar.
\end{proof}

Next result gives the first part of the inequality \eqref{1-7}. The proof is inspired by some computations in \cite{Fa,FaScVa,SiVa,StZu1,StZu2}.

\begin{theorem}\label{thm6}
Under the hypothesis of Theorem~\ref{thm1}, we have
 \begin{multline}\label{3-3}
  \int_{\hb}y^a|\nablah  u|^2|\nabla_{\hb}\phi|^2\\
  \geq \int_{\Rfp}y^a\left(|Hu|^2-\langle(Hu)^2\nu_{x,y},\nu_{x,y}\rangle_{\h}-2(TYuXu-TXuYu)\right)\phi^2.
 \end{multline}
\end{theorem}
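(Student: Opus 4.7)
The plan is to test the stability inequality~\eqref{1-5} with a regularization of $\xi = |\nablah u|\,\phi$ and to combine the outcome with the weak formulation of the linearized equations for $Xu$ and $Yu$, which is precisely the content of Lemma~\ref{lemma3}. Because $|\nablah u|$ may fail to be smooth at critical points, I would work with the approximant $P_\varepsilon := \bigl((Xu)^2+(Yu)^2+\varepsilon^2\bigr)^{1/2}$ and let $\varepsilon\to 0$ at the end, invoking Lemma~\ref{lemma2} (applied to $Xu$ and $Yu$) to discard the exceptional set $\hb\setminus\Rfp$.

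First, I would recast Lemma~\ref{lemma3} as the weak form of
\[
\divr_{\hb}(y^a\nabla_{\hb}Xu) = -2y^a\,TYu \text{ in } \hb, \qquad -y^a (Xu)_y = f'(u)Xu \text{ on } \h\times\{0\},
\]
together with an analogous identity for $Yu$ carrying the opposite sign on the $T$ term; the boundary contribution $f'(u)Xu$ is produced by applying the weak form~\eqref{1-3} with test function $X\tilde\xi$ and integrating by parts in the Euclidean-divergence-free vector field $X$. Testing each of these equations against $Xu\phi^2$, respectively $Yu\phi^2$, and summing (all integrals being finite by Lemma~\ref{lemma1}) yields the identity
\begin{equation*}
\begin{aligned}
& \int_{\hb} y^a\phi^2\bigl(|\nabla_{\hb}Xu|^2+|\nabla_{\hb}Yu|^2\bigr) + 2\int_{\hb} y^a\phi\,\bigl\langle Xu\,\nabla_{\hb}Xu + Yu\,\nabla_{\hb}Yu,\nabla_{\hb}\phi\bigr\rangle_{\hb} \\
& \qquad = \int_{\h} f'(u)|\nablah u|^2\phi^2 + 2\int_{\hb} y^a(TYu\cdot Xu - TXu\cdot Yu)\phi^2.
\end{aligned}
\end{equation*}

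Next, I would apply~\eqref{1-5} with $\xi = P_\varepsilon\phi$. Since $P_\varepsilon\,\nabla_{\hb}P_\varepsilon = Xu\,\nabla_{\hb}Xu + Yu\,\nabla_{\hb}Yu$ pointwise, the cross term in the expansion of $|\nabla_{\hb}(P_\varepsilon\phi)|^2$ coincides (up to the factor $2\phi$) with the one in the identity above. Substituting and cancelling the $f'(u)$ contributions, modulo an $O(\varepsilon^2)$ tail that vanishes in the limit, leaves
\begin{equation*}
\begin{aligned}
& \int_{\hb} y^a P_\varepsilon^2|\nabla_{\hb}\phi|^2 + 2\int_{\hb} y^a(TYu\cdot Xu - TXu\cdot Yu)\phi^2 \\
& \qquad \geq \int_{\hb} y^a\phi^2\bigl[(|\nabla_{\hb}Xu|^2+|\nabla_{\hb}Yu|^2) - |\nabla_{\hb}P_\varepsilon|^2\bigr] + \varepsilon^2\int_{\h} f'(u)\phi^2.
\end{aligned}
\end{equation*}

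Finally, letting $\varepsilon\downarrow 0$: the map $\varepsilon\mapsto|\nabla_{\hb}P_\varepsilon|^2$ is monotone increasing as $\varepsilon$ decreases, so the right-hand integrand is monotone and the monotone convergence theorem applies; Lemma~\ref{lemma2} applied to $Xu$ and $Yu$ forces $\nabla_{\hb}Xu=\nabla_{\hb}Yu=0$ a.e.\ on $\hb\setminus\Rfp$, so that region contributes nothing. On $\Rfp$, using $|\nablah u|\,\nabla_{\hb}|\nablah u|=Xu\,\nabla_{\hb}Xu+Yu\,\nabla_{\hb}Yu$ and $\nu_{x,y}=\nablah u/|\nablah u|$, a direct computation gives
\[
(|\nabla_{\hb}Xu|^2+|\nabla_{\hb}Yu|^2) - |\nabla_{\hb}|\nablah u||^2 = |Hu|^2 - \langle(Hu)^2\nu_{x,y},\nu_{x,y}\rangle_{\h} + R,
\]
with vertical remainder $R := ((Xu)_y)^2+((Yu)_y)^2 - (Xu(Xu)_y+Yu(Yu)_y)^2/|\nablah u|^2$ non-negative by Cauchy--Schwarz and thus droppable. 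Since $TYu\cdot Xu - TXu\cdot Yu$ also vanishes on $\{\nablah u=0\}$, the entire right-hand side may be restricted to $\Rfp$, giving exactly~\eqref{3-3}. The main difficulty is the bookkeeping of the $T$-commutators generated by $[X,Y]=T$, which are precisely the source of the $TYu$, $TXu$ contributions in~\eqref{3-3}; they have been absorbed cleanly into Lemma~\ref{lemma3}, while the secondary issue of non-smoothness of $|\nablah u|$ at its critical set is neutralized by the regularization $P_\varepsilon$ and Lemma~\ref{lemma2}.
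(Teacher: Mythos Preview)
Your proof is correct and follows essentially the same route as the paper: test the stability inequality with $|\nablah u|\phi$, use Lemma~\ref{lemma3} to rewrite $\int_{\h}f'(u)|\nablah u|^2\phi^2$ via the linearized equations for $Xu$ and $Yu$, cancel the cross terms, and drop the non-negative vertical remainder $(\partial_y|\nablah u|)^2-|\partial_y\nablah u|^2\le 0$ by Cauchy--Schwarz, invoking Lemma~\ref{lemma2} to dismiss the set $\{\nablah u=0\}$. The only difference is cosmetic: the paper plugs $\xi=|\nablah u|\phi$ directly, justifying its admissibility by \eqref{gradient_bound}, Lemma~\ref{lemma1} and a reference to \cite[Lemma~7]{SiVa}, whereas you make the regularization $P_\varepsilon=\sqrt{|\nablah u|^2+\varepsilon^2}$ explicit and pass to the limit---a slightly more self-contained but otherwise equivalent implementation of the same idea.
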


\begin{proof}
Let us consider $\xi=|\nablah u|\phi$ as a test function in \eqref{1-5}. Thanks to \eqref{gradient_bound} and Lemma~\ref{lemma1} (see also \cite[Lemma~7]{SiVa}), it is possible to use here such a test function. We deduce that
\begin{equation}\label{3-4}
\int_{\hb}y^a(|\nabla_{\h}(|\nablah u|\phi)|^2+|\partial_y(|\nablah u|\phi)|^2)-\int_{\h}f'(u)|\nablah u|^2\phi^2\geq 0.
\end{equation}

The first term can be computed in the same way as in \cite[Theorem~1.3]{FeVa08}. We find that, in $\Rfp$,
\begin{equation}
|\nablah(|\nablah u|\phi)|^2=|\nablah u|^2|\nablah \phi|^2+\phi^2\langle(Hu)^2\nu_{x,y} ,\nu_{x,y}\rangle_\h +2\langle Hu\nablah \phi ,\nablah u\rangle_\h\phi.
\end{equation}
By exploiting Lemma~\ref{lemma2} with $w=|\nablah u|$, we obtain that $\nabla_{\hb}(|\nablah u|\phi)=0$ almost everywhere outside $\Rfp$. Analogously, using Lemma~\ref{lemma2} with $w=Xu$ or $w=Yu$, we conclude that $\nablah Xu=\nablah Yu=0$ almost everywhere outside $\Rfp$. Thus, \eqref{3-4} is equivalent to
\begin{equation}\label{3-5}
\begin{aligned}
 0\leq &\int_{\Rfp}y^a(|\nablah u|^2|\nablah \phi|^2+\phi^2\langle(Hu)^2\nu_{x,y} ,\nu_{x,y}\rangle_\h+
 2\langle  Hu\nablah \phi ,\nablah u\rangle_\h\phi) \\
 &+\int_{\Rfp}y^a|\partial_y(|\nablah u|\phi)|^2-\int_{\h}f'(u)|\nablah u|^2\phi^2.
\end{aligned}
\end{equation}

Let us now compute the last term. First, note that
\begin{equation}\label{3-6} 
\begin{aligned}
 \int_{\h}f'(u)(Xu)^2\phi^2 &= \int_{\h}X(f(u))(Xu\phi^2) \\
 &= -\int_{\h}f(u)X(Xu\phi^2).
\end{aligned}
\end{equation}
Let $\xi$ be as in the previous lemma. By the weak solution notion \eqref{1-3} and the previous lemma, we deduce that
\begin{equation}\label{3-7}
-\int_{\h}f(u)X\xi=\int_{\hb}y^a\left[\langle\nabla_{\hb}Xu,\nabla_{\hb}\xi\rangle_{\hb}-2TYu\xi\right].
\end{equation}
A density argument (see Lemma~3.4 and Theorem~2.4 in \cite{ChSe}), implies that \eqref{3-7} holds for $\xi=-Xu\phi^2$, where $\phi$ is as in statement of Theorem~\ref{thm1}. Therefore
\begin{equation}\label{3-8}
-\int_{\h}f(u)X(Xu\phi^2)=\int_{\hb}y^a\left[\langle\nabla_{\hb}Xu,\nabla_{\hb}(Xu\phi^2)\rangle_{\hb}-2TYuXu\phi^2\right].
\end{equation}
Similarly, we have
\begin{equation}\label{3-9}
 -\int_{\h}f(u)Y(Yu\phi^2)=\int_{\hb}y^a\left[\langle\nabla_{\hb}Yu,\nabla_{\hb}(Yu\phi^2)\rangle_{\hb}+2TXuYu\phi^2)\right].
\end{equation}
Then, by \eqref{3-6} and then summing term by term in \eqref{3-8} and \eqref{3-9}, we see that
\begin{equation*}
 \begin{aligned}
 \int_{\h}f'(u)|\nablah u|^2 \phi^2 =&\int_{\h}f'(u)\left[(Xu)^2+ (Yu)^2\right]\phi^2\\ 
 =&\int_{\hb}y^a(|\nablah Xu|^2+|\nablah Yu|^2)\phi^2\\
 &+\int_{\hb}y^a(\langle\nablah Xu,\nablah(\phi^2)\rangle_{\h}Xu+\langle\nablah Yu,\nablah(\phi^2)\rangle_{\h}Yu)\\
 &+2\int_{\hb}y^a(TXuYu-TYuXu)\phi^2\\
 &+\int_{\hb}y^a\left[\partial_yXu\partial_y(Xu\phi^2)+\partial_yYu\partial_y(Yu\phi^2)\right].
 \end{aligned}
\end{equation*}

Putting this in \eqref{3-5} we deduce, after a rearrangement, that
\begin{equation}\label{3-10}
 \begin{aligned}
  0\leq &\int_{\Rfp}y^a|\nablah u|^2|\nablah \phi|^2 \\
  &+\int_{\Rfp}y^a\left[-|\nablah Xu|^2-|\nablah Yu|^2+\langle(Hu)^2\nu_{x,y},\nu_{x,y}\rangle_{\h}\right. \\
  &\left.+2(TYuXu-TXuYu)\right]\phi^2 \\
  &+\int_{\Rfp}y^a|\partial_y(|\nablah u|
  \phi)|^2-\int_{\hb}y^a\left[\partial_yXu\partial_y(Xu\phi^2)+\partial_yYu\partial_y(Yu\phi^2)\right];
 \end{aligned}
\end{equation}
here we used the fact that: 
\[
2\langle  Hu\nablah \phi ,\nablah u\rangle_\h\phi-\langle\nablah Xu,\nablah(\phi^2)\rangle_{\h}Xu-\langle\nablah Yu,\nablah(\phi^2)\rangle_{\h}Yu=0.
\]

Finally, developing some calculations for the last terms in \eqref{3-10}, we conclude that
\begin{equation*}
 \begin{aligned}
 \int_{\Rfp}y^a|\partial_y &(|\nablah u|\phi)|^2-\int_{\hb}y^a\left[\partial_yXu\partial_y
 (Xu\phi^2)  +\partial_yYu\partial_y(Yu\phi^2)\right]\\
 =&\int_{\hb}y^a|\nablah u|^2(\partial_y\phi)^2+\int_{\Rfp}y^a[(\partial_y|\nablah u|\phi)^2
 +2| \nablah u|\partial_y|\nablah u|\phi\partial_y\phi\\
 &-|\partial_y\nablah u|^2\phi^2-\frac{1}{2}\partial_y|\nablah u|^2\partial_y(\phi^2)]\\
 =&\int_{\hb}y^a|\nablah u|^2(\partial_y\phi)^2+\int_{\Rfp}y^a[(\partial_y|\nablah u|)^2
 -| \partial_y\nablah u|^2]\phi^2\\
 \leq&\int_{\hb}y^a|\nablah u|^2(\partial_y\phi)^2.
 \end{aligned}
\end{equation*}
For the last inequality, note that, on $\Rfp$,
\begin{equation*}
(\partial_y|\nablah u|)^2=\left|\frac{\nablah u\cdot\nablah \partial_{y}u}{|\nablah u|}\right|^2\leq|\partial_y\nablah u|^2.
\end{equation*}
This and \eqref{3-10} complete the proof.
\end{proof}

\subsection{Geometrical computations}

To obtain the second part of \eqref{1-7}, it is necessary a geometric analysis of the level sets of $u$ at non-degenerate points $P$ where $\{\nablah u\neq 0\}$ (recall the smooth manifold $L$, defined in \eqref{1-6}). We omit the details and instead refer the reader to \cite{ArFe07,ArFe08} and \cite[Section~2]{FeVa08}. 

\begin{lemma}\label{lemma4}
On the smooth manifold $L$ we have 
 \begin{equation}
 |Hu|^2-\langle(Hu)^2\nu_{x,y},\nu_{x,y}\rangle_{\h}=|\nablah u|^2\left[h_{x,y}^2+\left(p_{x,y}+\frac{\langle 
 Huv_{x,y}, \nu_{x,y}\rangle_{\h}}{|\nablah u|}\right)^2\right]
 \end{equation}
and
 \begin{equation}
 TYuXu-TXuYu=-|\nablah u|^2\langle T\nu_{x,y},v_{x,y}\rangle_{\h}.
 \end{equation}
\end{lemma}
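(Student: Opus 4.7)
The plan is to reduce both identities to direct algebraic calculations in the orthonormal frame $\{\nu_{x,y},v_{x,y}\}$ of the fiber $\mathcal{H}_P$, using only the definitions of the intrinsic quantities and the single commutator relation $[X,Y]=T$. The key point is that on $L$, where $\nablah u\neq 0$, the pair $\{\nu_{x,y},v_{x,y}\}$ is an orthonormal basis of $\mathcal{H}_P$, so every $2\times 2$ horizontal quantity may be expanded in this frame.

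For the first identity I would first observe that, since $(Hu)^2=(Hu)(Hu)^T$,
\[
\langle (Hu)^2\nu_{x,y},\nu_{x,y}\rangle_{\h}=|(Hu)^T\nu_{x,y}|_{\h}^2=\langle Hu\,\nu_{x,y},\nu_{x,y}\rangle_{\h}^2+\langle Hu\,v_{x,y},\nu_{x,y}\rangle_{\h}^2,
\]
while the Frobenius norm decomposes as
\[
|Hu|^2=\langle Hu\,\nu_{x,y},\nu_{x,y}\rangle_{\h}^2+\langle Hu\,v_{x,y},\nu_{x,y}\rangle_{\h}^2+\langle Hu\,\nu_{x,y},v_{x,y}\rangle_{\h}^2+\langle Hu\,v_{x,y},v_{x,y}\rangle_{\h}^2,
\]
so the left-hand side equals $\langle Hu\,\nu_{x,y},v_{x,y}\rangle_{\h}^2+\langle Hu\,v_{x,y},v_{x,y}\rangle_{\h}^2$. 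Then I would verify two elementary identities by a direct expansion in coordinates with respect to the $(X,Y)$-frame: first, that $\langle Hu\,v_{x,y},v_{x,y}\rangle_{\h}=h_{x,y}|\nablah u|$, which follows by writing out $h_{x,y}=\mathrm{div}_{\h}\nu_{x,y}$, differentiating $\nablah u/|\nablah u|$, and simplifying against the explicit coordinates of $v_{x,y}=(Yu,-Xu)/|\nablah u|$; and second, that $\langle Hu\,\nu_{x,y},v_{x,y}\rangle_{\h}-\langle Hu\,v_{x,y},\nu_{x,y}\rangle_{\h}=-Tu=p_{x,y}|\nablah u|$, which is immediate from the commutator $XY-YX=T$ after the $W^2$-denominators cancel. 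Substituting these gives the claimed formula.

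For the second identity I would compute $T\nu_{x,y}$ via the quotient rule on $\nablah u/|\nablah u|$ and then pair with $v_{x,y}$. The term involving $T|\nablah u|/|\nablah u|^2$ contributes $\langle(Xu,Yu),v_{x,y}\rangle_{\h}=0$ because $\nu_{x,y}\perp v_{x,y}$, leaving
\[
\langle T\nu_{x,y},v_{x,y}\rangle_{\h}=\frac{TXu\cdot Yu-TYu\cdot Xu}{|\nablah u|^2},
\]
which rearranges into the desired equality.

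I do not expect any genuine obstacle; both identities are purely algebraic, and the only care required is bookkeeping of the $(X,Y)$-coordinate expansions and the fact that the antisymmetry of $\{Hu\,\nu_{x,y},v_{x,y}\}$ contributes precisely the commutator term $Tu$. A full-detail version of these computations appears in \cite[Section~2]{FeVa08}, to which the authors already defer.
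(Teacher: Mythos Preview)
Your proposal is correct and follows exactly the computational route the paper intends: the paper itself omits all details and simply refers the reader to \cite{ArFe07,ArFe08} and \cite[Section~2]{FeVa08}, which contain precisely the orthonormal-frame expansion and the two key identities $\langle Hu\,v_{x,y},v_{x,y}\rangle_{\h}=h_{x,y}|\nablah u|$ and $\langle Hu\,\nu_{x,y},v_{x,y}\rangle_{\h}-\langle Hu\,v_{x,y},\nu_{x,y}\rangle_{\h}=-Tu$ that you outline. Your argument supplies the details the authors chose to suppress, with no deviation in strategy.
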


\begin{proof}[Proof of Theorem~\ref{thm1}]
Finally, the form of the geometric inequality given in \eqref{1-7} is a consequence of Theorem~\ref{thm6} and the previous lemma.
\end{proof}


\section{Applications to entire stable solutions}

\subsection{Proof of Theorem~\ref{thm2}}

The strategy for proving Theorem~\ref{thm2} is to test the geometric formula of Theorem~\ref{thm1} against an appropriate capacity-type function to make the left-hand side vanish. This would give that the curvature of the level sets for fixed $y>0$ vanishes.

For this, given $x=(x_1,x_2,x_3)\in\h$ we define its gauge norm as
\begin{equation}\label{4-1}
|x|_\h=\left((x_1^2+x_2^2)^2+x_3^2\right)^{1/4}.
\end{equation} 
We also use the notation $Z:=(x,y)$ for points in $\hb$ and define the norm
\begin{equation}\label{4-2}
|Z|_{\hb}:=\left(|x|_{\h}^2+y^2\right)^{1/2}
\end{equation}
(recall that $\hb=\h\times\R_+$). Analogously, we denote the ball centered at 0 of radius $R$ by
\[
B(0,R)=\{Z\in\hb \text{ s.t. } |Z|_{\hb}<R\}.
\]
and, given $r_1\leq r_2$, the semi-annulus by
\[\A_{r_1,r_2}:=\{Z\in\R^4_+ \text{ s.t. } |Z|_{\hb}\in[r_1,r_2]\}.\]

\begin{lemma}\label{lemma5}
Let $g\in L^\infty_{\mathrm{loc}}(\R^4_+,[0,+\infty))$ and let $q>0$. Let also, for any $\tau>0$,
\begin{equation}\label{4-3}
\eta(\tau)=\int_{B(0,\tau)}\! g(Z)\,\di Z.
\end{equation}
Then, for every $0<r<R$,
\[\int_{\A_{r,R}}\! \frac{g(Z)}{|Z|_{\hb}^q} \,\di Z \leq q\int_r^R \! \frac{\eta(\tau)}{\tau^{q+1}}\,\di \tau+\frac{\eta(R)}{R^q}.\]
\end{lemma}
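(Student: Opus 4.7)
The plan is to reduce the inequality to a straightforward application of Fubini's theorem after rewriting the weight $|Z|_\hb^{-q}$ as an integral.

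First I would observe the elementary identity
\[
\frac{1}{|Z|_\hb^q}=q\int_{|Z|_\hb}^{R}\frac{\di\tau}{\tau^{q+1}}+\frac{1}{R^q},
\]
which is valid for every $Z\in\A_{r,R}$ (since then $|Z|_\hb\leq R$). Multiplying by the non-negative function $g(Z)$ and integrating over $\A_{r,R}$ splits the left-hand side as
\[
\int_{\A_{r,R}}\frac{g(Z)}{|Z|_\hb^q}\,\di Z
=q\int_{\A_{r,R}}g(Z)\!\int_{|Z|_\hb}^{R}\!\frac{\di\tau}{\tau^{q+1}}\,\di Z
+\frac{1}{R^q}\int_{\A_{r,R}}g(Z)\,\di Z.
\]

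Next I would swap the order of integration in the first term via Fubini (permissible because $g\geq 0$): for each $\tau\in[r,R]$, the condition $|Z|_\hb\leq\tau$ combined with $Z\in\A_{r,R}$ gives $Z\in\A_{r,\tau}$, so
\[
q\int_{\A_{r,R}}g(Z)\!\int_{|Z|_\hb}^{R}\!\frac{\di\tau}{\tau^{q+1}}\,\di Z
=q\int_{r}^{R}\frac{1}{\tau^{q+1}}\!\int_{\A_{r,\tau}}g(Z)\,\di Z\,\di\tau.
\]

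The final step is to use monotonicity of the integral: since $g\geq 0$ and $\A_{r,\tau}\subset B(0,\tau)$, we have $\int_{\A_{r,\tau}}g\leq\eta(\tau)$, and similarly $\int_{\A_{r,R}}g\leq\eta(R)$. Plugging these bounds into the two pieces above yields exactly the claimed inequality. I do not expect any genuine obstacle; the only point requiring minor care is the justification of Fubini, which is immediate from the non-negativity of $g$ and the local integrability hypothesis $g\in L^\infty_{\mathrm{loc}}(\R^4_+,[0,+\infty))$ guaranteeing finiteness of $\eta(\tau)$ for every $\tau>0$.
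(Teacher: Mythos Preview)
Your proof is correct and follows essentially the same approach as the paper: both rewrite $|Z|_{\hb}^{-q}$ via the identity $|Z|_{\hb}^{-q}=q\int_{|Z|_{\hb}}^R\tau^{-q-1}\,\di\tau+R^{-q}$, swap the order of integration, and then bound the inner integral by $\eta(\tau)$ using $\A_{r,\tau}\subset B(0,\tau)$. The only cosmetic difference is that the paper passes directly to $\int_{B(0,\tau)}g$ after the swap, whereas you keep $\int_{\A_{r,\tau}}g$ and then invoke monotonicity explicitly.
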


\begin{proof}
By changing order of integration,
 \begin{align*}
 \int_{\A_{r,R}}\! &\frac{g(Z)}{|Z|_{\hb}^q} \,\di Z\\
 &=q\int_{\A_{r,R}}\! \left(\int_{|Z|_{\hb}}^R \frac{g(Z)}{\tau^{q+1}} \,\di\tau\right)\,      
 \di Z+\frac{1}{R^q}\int_{\A_{r,R}}\! g(Z)\,\di Z\\
 &\leq q\int_r^R\! \left(\int_{B(0,\tau)} \frac{g(Z)}{\tau^{q+1}} \,\di Z\right)\,      
 \di \tau+\frac{\eta(R)}{R^q}\\
 &\leq q\int_r^R \! \frac{\eta(\tau)}{\tau^{q+1}}\,\di \tau+\frac{\eta(R)}{R^q}.
 \end{align*}
\end{proof}

\begin{proof}[Proof of Theorem~\ref{thm2}]
Given $Z=(x,y)\in\hb$, let us consider the function 
\[
g(Z)=4y^a|\nablah u(Z)|^2(x_1^2+x_2^2+y^2)
\]
(recall that $x=(x_1,x_2,x_3)$). Then, the function $\eta$ defined in \eqref{1-8} is consistent with the notation in \eqref{4-3}. Moreover, by \eqref{1-10} and the previous lemma,
\begin{equation}\label{4-4}
\liminf_{R\to+\infty}\frac{1}{(\log R)^2}\int_{\A_{\sqrt{R},R}}\!\frac{g(Z)}{|Z|_{\hb}^4}\, \di Z=0.
\end{equation}

Now, we define for all $R>1$ the test function
\begin{equation*}
\phi_R(Z)=\left\{
 \begin{aligned}
 & 1 &&\text{if } |Z|_{\hb}\leq\sqrt{R},\\
 & \frac{2\log \left(\frac{R}{|Z|_{\hb}}\right)}{\log R} && \text{if } \sqrt{R}<|Z|_{\hb}<R,\\
 & 0 &&\text{if } |Z|_{\hb}\geq R, 
 \end{aligned}
 \right.
\end{equation*}
and we observe that
\begin{equation*}
\partial\phi_R =-\frac{2}{\log R}|Z|_{\hb}^{-1}\partial(|Z|_{\hb}),
\end{equation*}
where $\partial$ can be any of the operators $X$, $Y$ or $\partial_{y}$. It is straightforward to verify that
\begin{align*}
X(|Z|_{\hb})&=|Z|_{\hb}^{-1}|x|_{\h}^{-2}[x_1(x_1^2+x_2^2)+x_2x_3],\\
Y(|Z|_{\hb})&=|Z|_{\hb}^{-1}|x|_{\h}^{-2}[x_2(x_1^2+x_2^2)-x_1x_3],\\
\partial_y(|Z|_{\hb})&=|Z|_{\hb}^{-1}y.
\end{align*}
Therefore, for $Z\in\A_{\sqrt{R},R}$,
\begin{align*}
|\nabla_{\hb}\phi_R(Z)|^2 &=(X\phi_R)^2+(Y\phi_R)^2+(\partial_y\phi_R)^2\\
&=\frac{4}{(\log R)^2}|Z|_{\hb}^{-2}[X(|Z|_{\hb})^2+Y(|Z|_{\hb})^2+\partial_y(|Z|_{\hb})^2]\\
&=\frac{4}{(\log R)^2}|Z|_{\hb}^{-4}(x_1^2+x_2^2+y^2).
\end{align*}
Thus, plugging $\phi_R$ inside the geometric inequality of Theorem~\ref{thm1}, we deduce that
\begin{gather*}
\int_{B(0,\sqrt{R})^+\cap\Rfp}y^a|\nablah u|^2\left[h_{x,y}^2+\left(p_{x,y}+\frac{\langle Huv_{x,y},\nu_{x,y}\rangle_{\h}}{|\nablah  u|}\right)^2+2\langle T\nu_{x,y},v_{x,y}\rangle_{\h}\right]\\
=\frac{1}{(\log R)^2}\int_{\A_{\sqrt{R},R}}\! \frac{g(Z)}{|Z|_{\hb}^4} \,\di Z,
\end{gather*}
for all $R>1$. 

Theorem~\ref{thm2} follows from the previous identity together \eqref{1-9} and \eqref{4-4}.
\end{proof}

\subsection{Proof of Theorem~\ref{thm3}}
Before proving Theorem~\ref{thm3}, let us state the following regularity result for the extension with the Poisson kernel (see \eqref{2-4}). Recall that $C^\alpha(\h)$ denotes the set of H\"older continuous functions with respect to the norm $\rho$ defined in \eqref{rho-norm}.
\begin{lemma}
Let $v\in C^{2,\sigma}(\h)\cap L^\infty(\h)$, $\sigma\in(0,2s)$. Then the function $u(\cdot,y)=v\ast P_\h(\cdot,y)$, defined in \eqref{2-4}, satisfies 
\[
u\in C^{0,\sigma}(\overline{\hb}).
\] 
\end{lemma}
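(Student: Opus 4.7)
The plan is to establish the Hölder continuity of $u$ up to the boundary $\{y=0\}$ by exploiting the scaling and decay properties of the Poisson kernel $P_\h$, combined with the Hölder regularity of $v$ on $\h$ and the interior regularity theory for the degenerate elliptic equation~\eqref{1-2}. The decisive estimate is the boundary bound $|u(x,y)-v(x)|\leq Cy^\sigma$; all other pieces follow from it by the triangle inequality and standard arguments.

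To obtain the boundary estimate, first observe that the ODE defining the Poisson semigroup in Subsection~2.3 enforces $\phi(0)=1$, which yields $\int_\h P_\h(z,y)\,\di z=1$ for every $y>0$ (consistent with $u(\cdot,y)\to v$ as $y\to 0^+$). Combining~\eqref{2-2} and~\eqref{2-4} one rewrites
\[
u(x,y)-v(x)=\int_\h\bigl[v(x\circ z^{-1})-v(x)\bigr]\,P_\h(z,y)\,\di z,
\]
and by left-invariance of $\rho$ together with $v\in C^\sigma(\h)$ the integrand is controlled pointwise by $[v]_{C^\sigma}\rho(z)^\sigma P_\h(z,y)$. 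Plugging the heat-kernel dilation identity $h(\lambda^2 t,\delta_\lambda z)=\lambda^{-4}h(t,z)$ (with $4$ the homogeneous dimension of $\h$) into~\eqref{2-3} yields the Poisson scaling
\[
P_\h(z,y)=y^{-4}P_\h(\delta_{1/y}z,1),
\]
so that the change of variable $z=\delta_y w$, using $\rho(\delta_y w)=y\rho(w)$ and $\di z=y^4\di w$, gives
\[
\int_\h\rho(z)^\sigma P_\h(z,y)\,\di z=y^\sigma\int_\h\rho(w)^\sigma P_\h(w,1)\,\di w.
\]
Finiteness of the right-hand integral relies on the pointwise decay $P_\h(w,1)\lesssim(1+\rho(w))^{-(4+2s)}$ as $\rho(w)\to+\infty$, which is extracted from the Gaussian upper bound $h(t,w)\leq Ct^{-2}\exp(-c\rho(w)^2/t)$ for the Heisenberg heat kernel substituted into~\eqref{2-3} and optimized in $t>0$; this integrability holds exactly when $\sigma<2s$, yielding $|u(x,y)-v(x)|\leq C\,[v]_{C^\sigma(\h)}\,y^\sigma$.

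The global $C^{0,\sigma}$ bound on $\overline\hb$ then follows by the standard splitting into near-boundary and interior regimes. In the near-boundary case --- when $\min(y_1,y_2)$ is comparable to (or smaller than) $\rho(x_2^{-1}\circ x_1)+|y_1-y_2|$ --- one applies
\[
|u(x_1,y_1)-u(x_2,y_2)|\leq|u(x_1,y_1)-v(x_1)|+|v(x_1)-v(x_2)|+|v(x_2)-u(x_2,y_2)|,
\]
combined with the boundary estimate and $|v(x_1)-v(x_2)|\leq[v]_{C^\sigma}\rho(x_2^{-1}\circ x_1)^\sigma$. In the interior regime, Hölder continuity away from $\{y=0\}$ is provided by the De Giorgi--Nash--Moser theory for the Muckenhoupt-weighted operator in~\eqref{1-2} (cf.~Subsection~2.4). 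The main obstacle is the sharp decay estimate $P_\h(w,1)\lesssim\rho(w)^{-(4+2s)}$, which is precisely where the hypothesis $\sigma<2s$ enters the proof; aside from this nontrivial kernel estimate, every other ingredient is a routine consequence of the scaling properties of $\h$ and of $v\in C^\sigma(\h)$.
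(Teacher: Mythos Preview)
Your route differs from the paper's and carries a gap in the interior step. The paper does not split into boundary and interior regimes at all: after deriving the same scaling identity $P_\h(\delta_y\xi,y)=y^{-4}P_\h(\xi,1)$ you use, it rewrites
\[
u(x,y)=C_a\int_\h v\bigl(x\circ\delta_y(\xi)^{-1}\bigr)\,P_\h(\xi,1)\,\di\xi
\]
and compares $u(x^{(1)},y_1)$ with $u(x^{(2)},y_2)$ directly under the integral sign, invoking only the H\"older regularity of $v$ together with the homogeneous-norm inequality
\[
d\bigl(x^{(1)}\circ\delta_{y_1}(\xi)^{-1},\,x^{(2)}\circ\delta_{y_2}(\xi)^{-1}\bigr)\leq C\bigl[d(x^{(1)},x^{(2)})+|y_1-y_2|\,\rho(\xi)\bigr].
\]
This produces $|u(x^{(1)},y_1)-u(x^{(2)},y_2)|\leq C\bigl[d(x^{(1)},x^{(2)})^\sigma+|y_1-y_2|^\sigma\int_\h\rho(\xi)^\sigma P_\h(\xi,1)\,\di\xi\bigr]$ in one stroke, and the kernel decay $P_\h(\xi,1)\lesssim\rho(\xi)^{-4-2s}$ (the same estimate you cite) makes the integral finite precisely when $\sigma<2s$. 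No interior regularity theory is invoked.

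Your boundary bound $|u(x,y)-v(x)|\leq Cy^\sigma$ is correct and is essentially the special case $x^{(1)}=x^{(2)}$, $y_2\to 0$ of the paper's computation. The problem is your interior regime: De Giorgi--Nash--Moser theory for the weighted operator in~\eqref{1-2} supplies H\"older continuity with \emph{some} exponent $\alpha_0>0$ depending only on the structural constants, not with the prescribed exponent $\sigma$; if $\sigma>\alpha_0$ the argument does not close. To salvage the splitting strategy you would need instead a scale-invariant interior gradient estimate of the type $\sup_{B_r}|\nabla_{\hb}u|\leq Cr^{-1}\operatorname{osc}_{B_{2r}}u$ (available here since the operator is hypoelliptic with smooth coefficients away from $\{y=0\}$), which combined with your boundary oscillation bound $\operatorname{osc}_{B_r}u\lesssim r^\sigma$ does recover the exponent $\sigma$ in the interior case. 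The paper's direct comparison sidesteps this patching entirely; your decomposition is a legitimate alternative once the interior step is upgraded from DGNM to a genuine gradient bound.
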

\begin{proof}
Fix $\sigma \in (0,1]$ and let $v\in C^{2,\sigma}(\h)\cap L^\infty(\h)$. Recall the following homogeneity property of $h=h(t,x)$ (the fundamental solution of $\Delta_{\h}+\partial/\partial t$):
\begin{equation}
h(r^2t,\delta_r(x))=r^{-4}h(t,x) \quad \text{for all } (t,x)\in (0,+\infty)\times \h
\end{equation}
(see (3.2) in \cite{Fo75}), where $\delta_r$ is the family of dilatations defined in \eqref{2-1a}.

We have, for all $\xi\in\h$ and $y>0$,
\begin{align*}
P_\h(\delta_y(\xi),y)&= C_a y^{1-a}\int_0^\infty \! t^{(a-3)/2}\e^{-\frac{y^2}{4t}}h(t,\delta_y(\xi))\,\di t\\
&=C_a \int_0^\infty \! t^{(a-3)/2}\e^{-\frac{1}{4t}}h(y^2t,\delta_y(\xi))\,\di t\\
&=C_a y^{-4}\int_0^\infty \! t^{(a-3)/2}\e^{-\frac{1}{4t}}h(t,\xi)\,\di t\\
&=C_a y^{-4} P_{\h}(\xi,1).
\end{align*}
Then, given $(x,y)\in \hb$, we have
\begin{align*}
u(x,y)&=\int_\h \! v(x\circ\xi^{-1})P_\h(\xi,y)\,\di\xi\\
&=y^4\int_\h \! v(x\circ\delta_y(\xi)^{-1})P_\h(\delta_y(\xi),y)\,\di \xi\\
&=C_a\int_\h \! v(x\circ\delta_y(\xi)^{-1})P_\h(\xi,1)\,\di\xi.
\end{align*}

Therefore, for $(x^{(1)},y_1),(x^{(2)},y_2)\in\hb$
\begin{align*}
|u(x^{(1)},y_1)&-u(x^{(2)},y_2)|\\
         &\leq C_a\int_\h \! |v(x^{(1)}\circ\delta_{y_1}(\xi)^{-1})-v(x^{(2)}\circ\delta_{y_2}(\xi)^{-1})|P_\h(\xi,1)\,\di \xi\\
         &\leq C\int_\h \! d(x^{(1)}\circ\delta_{y_1}(\xi)^{-1},x^{(2)}\circ\delta_{y_2}(\xi)^{-1})^{\sigma}P_\h(\xi,1)\,\di\xi,
\end{align*}
where $d$ is the homogeneous distance associated to the homogeneous norm in \eqref{rho-norm}. 

Using the properties of homogeneous norms in Carnot groups (see \cite[Section~5.1]{BoLaUg}), we deduce that
\[
d(x^{(1)}\circ\delta_{y_1}(\xi)^{-1},x^{(2)}\circ\delta_{y_2}(\xi)^{-1})\leq C[d(x^{(1)},x^{(2)})+|y_1-y_2|\rho(\xi)].
\]
Putting this in the previous inequality, we find that
\begin{equation}
|u(x^{(1)},y_1)-u(x^{(2)},y_2)|\leq C\left[d(x^{(1)},x^{(2)})^\sigma+|y_1-y_2|^\sigma\int_\h \! \rho(\xi)^\sigma P_\h(\xi,1)\,\di\xi\right].
\end{equation}
The integral in this expression is finite because the function $P_\h(\xi,1):\h\rightarrow\R$ is bounded around the origin, $\sigma\in(0,2s)$ and
\[
|P_\h(\xi,1)|\leq C\rho(y)^{-2s-4}
\] 
for large $\rho$ (see Remark~4.5 in \cite{FeFr} and (1.73) in \cite{FoSt}). We conclude that $u\in C^{0,\sigma}(\overline{\hb})$. 
\end{proof}

\begin{proof}[Proof of Theorem~\ref{thm3}]
Let $v$ be a bounded stable solution of \eqref{1-1}. We select the extension $u(\cdot,y)=v(\cdot)\ast P_\h(\cdot,y)$ by the Poisson kernel in \eqref{2-3}, that is,
\begin{equation}\label{4-5}
u(x,y)=\int_\h\! v(z)P_{\h}(z^{-1}\circ x,y)\,\di z.
\end{equation}

Let us check that $u$ satisfies the hypothesis of Theorem~\ref{thm1}. Indeed, since $P_\h(\cdot,y)\in L^1(\h)$ and $v\in L^\infty(\h)$, $u$ is well defined and bounded. Moreover, by a regularity property (see Proposition~4.3 in \cite{FeFr}),
\begin{equation}
u\in W^{1,2}_{\hb}(B_R^+;y^a\di x\di y) \quad \text{for all } R>0, 
\end{equation} 
which implies \eqref{regularity}. Therefore, $u$ is a stable weak solution (see Remark~\ref{rmk1}) of \eqref{1-2}. On the other hand, gradient bound condition \eqref{gradient_bound} follows by the following regularity argument: Given a multi-index $I$, the derivatives of $P_{\h}$ in $\h$ have the decay (see Remark~4.5 in \cite{FeFr})
\[
|\partial^I P_\h(x,y)|\leq C\rho^{-2s-4-|I|}
\]
for large $\rho=\rho(x)$. Thus, if we take ``$\h$-derivatives" in \eqref{4-5} and then use a similar argument to that in the proof of the previous lemma, we see that $u$ and its second order derivatives in $\h$ are continuous up to the boundary in $\hb$. 

Therefore, $u$ satisfies the hypothesis of Theorem~\ref{thm2}, and the level sets of $u$ intersected with $L$ (recall \eqref{1-6}) are minimal surfaces in the Heisenberg group. Theorem~\ref{thm3} follows by taking $y\to 0^+$.
 
\end{proof}


\bigskip
\noindent {\bf Acknowledgment.} L.F. L\'opez was supported by a doctoral grant of CONICYT (Chile) and {\em Fondo Basal} CMM.


\providecommand{\bysame}{\leavevmode\hbox to3em{\hrulefill}\thinspace}
\providecommand{\MR}{\relax\ifhmode\unskip\space\fi MR }
\providecommand{\MRhref}[2]{%
  \href{http://www.ams.org/mathscinet-getitem?mr=#1}{#2}
}
\providecommand{\href}[2]{#2}

\end{document}